\newcommand{\R}{\mbox{\F R}}
\def\R{{\mathbb R}}
\def\Q{{\mathbb Q}}
\def\P{{\mathbb P}}
\def\bu{{\bf u}}
\def\bU{{\bf U}}
\def\bq{{\bf q}}
\def\bx{{\bf x}}
\def\by{{\bf y}}
\def\ba{{\bf a}}
\def\bA{{\bf A}}
\def\by{{\bf y}}
\def\bomega{\boldsymbol\omega}
\def\bOmega{\boldsymbol\Omega}
\newtheorem{teo}{Theorem}[section]
\newtheorem{definition}{Definition}[section]
\newenvironment{proof}[1][Proof]{\medskip\noindent\textit{#1. }\upshape}{\medskip}
\newtheorem{lemma}{Lemma}[section]
\newtheorem{corollary}{Corollary}[section]
\numberwithin{equation}{section}
\begin{document}

\date{}
\title{Weak-strong uniqueness for fluid-rigid body interaction problem with
slip boundary condition}
\author{{\large Nikolai V. Chemetov$^1$, \v S\'arka Ne\v casov\'a$^2$, Boris
Muha $^3$} \\
\\
{\small $^1$ University of Lisbon }\\
{\small nvchemetov@gmail.com}\\
{\small $^2$ Institute of Mathematics, }\\
{\small \v Zitn\'a 25, 115 67 Praha 1, Czech Republic }\\
{\small matus@math.cas.cz}\\
{\small $^3$ Department of Mathematics}\\
{\small Faculty of Science}\\
{\small University of Zagreb, Croatia}\\
{\small borism@math.hr}}
\maketitle

\begin{abstract}
We consider a coupled PDE-ODE system describing the motion of the rigid body
in a container filled with the incompressible, viscous fluid. The fluid and
the rigid body are coupled via Navier's slip boundary condition. We prove
that the local in time strong solution is unique in the larger class of weak
solutions on the interval of its existence. This is the first weak-strong
uniqueness result in the area of fluid-structure interaction {with a moving boundary.}
\end{abstract}

\date{}



\section{Introduction}


\bigskip

\bigskip

A solid body motion in a fluid is a widespread phenomenon in nature, being
one of the most classical problem of fluid mechanics. The understanding of
the correct mathematical description of fluid-structure interaction has
several important applications in many branches, such as in civil
engineering, aerospace engineering, nuclear engineering, ocean engineering,
biomechanics and etc..

{
As well-known \cite{G2} the fluid motion fulfills the Navier-Stokes
equations and the solid motion is described by a system of ordinary
differential equations of momentum conservation laws. The fluid and solids are coupled through kinematic and dynamic coupling condition. The most usual kinematic coupling condition in the literature is the standard no-slip boundary condition: the
continuity of velocities of the fluid and the solids at the fluid-body
interfaces. Such approach has been investigated by many authors, e.g. \cite{CST}-%
\cite{FHN},\ \cite{HOST, SST} and references within. Despite lot of progress there are only few uniqueness results for weak solutions. The uniqueness of weak solution for the fluid-rigid body system in $2D$ was proven in \cite{GS15}. The uniqueness of weak solution to $3D$ Navier-Stokes equation is a famous open problem (see e.g. \cite{Galdi}). However, it is well known that strong solution (i.e. solution with some extra regularity or integrability) is unique in a larger class of weak solution on interval of its existence \cite{Serrin62}. These type of results are called weak-strong uniqueness results. The main result of this paper is to prove a weak-strong uniqueness result for the fluid-rigid body system. We also mention that in \cite{Disser2016} the authors studied the motion of rigid body containing a cavity filled with the fluid and proved a weak-strong uniqueness result for that problem.
}

However, it has been shown in \cite{HES},
\cite{HIL}, \cite{S} that the non-slip condition exhibits an unrealistic
phenomenon: two smooth solids can not touch each other.\ The non-slip
condition prescribes the adherence of fluid particles to the solid
boundaries and, as a consequence of a regularity of the fluid velocity,
permits the creation of fine boundary layer that does not allow the contact
of the solids.

Another method for coupling of the fluid and of the bodies admits the
slippage of fluid particles at the boundaries, which is described by
Navier's boundary condition. The first step in this direction of the study
of Navier's condition was done by Neustupa, Penel \cite{NP1}, \cite{NP2},
who demonstrate that the collision with a wall can occur for a prescribed
movement of a solid ball, when the slippage was allowed on both boundaries.
We refer for a discussion of Navier's boundary condition to Introduction of
\cite{muha}. In this last work a local in time existence result was
demonstrated for the motion of the fluid and an elastic structure with prescribed
Navier's condition on the boundaries.
{Motivated by these works, in this paper we study coupling via Navier's slip boundary condition.} Also we mention the article \cite{GH2}
where a local existence up to collisions of a weak solution for a
fluid-solid structure was proved.\ The existence of a strong solution in $2D$ case was
proven by Wang \cite{Wa}. \

The global in time existence of the weak solution was proven in \cite{CN} for a mixed case, when
Navier's condition was given on the solid boundary and the
non-slip condition on the domain boundary.  This result admits the collisions of
the solid with the domain boundary ({for more detailed discussion concerning influence of boundary conditions on the collision see \cite{GHC}}). Recently the local in time existence of
the strong solution for the mixed case was demonstrated in \cite{ACMN}.\

\section{Preliminaries}

We shall investigate the motion of a rigid body inside of a viscous
incompressible fluid. The fluid and the body occupy a bounded open domain $%
\Omega \subset \mathbb{R}^{N}$ $(N=2$ or $3)$. Let the body be a connected
open set ${S}_{0}\subset \Omega $ at the initial time $t=0.$\ The fluid
fills the domain $F_{0}=\Omega \backslash \overline{S_{0}}$ at $t=0.$

The Cartesian coordinates $\mathbf{y}$ of points of the body at $t=0$ are
called the Lagrangian coordinates. The motion of any material point $\mathbf{%
y}=(y_{1},..,y_{N})^{T}\in S_{0}$ is described by two functions:
\begin{equation*}
t\rightarrow \mathbf{q}(t)\in \mathbb{R}^{N}\quad \ \text{and }\quad
t\mapsto \mathbb{Q}(t)\in SO(N)\qquad \text{for}\quad t\in \lbrack 0,T],
\end{equation*}%
where $\mathbf{q}=\mathbf{q}(t)$ \ is the position of the body mass center
at a time $t$ and $SO(N)$ is the rotation group in $\mathbb{R}^{N},$ i.e.
the $\mathbb{Q}=\mathbb{Q}(t)$ \ is a matrix, satisfying $\mathbb{Q}(t)%
\mathbb{Q}(t)^{T}=\mathbb{I},$ $\mathbb{Q}(0)=\mathbb{I}$\ with $\mathbb{I}$
being the identity matrix. Therefore, the trajectories of all points of the
body are described by a preserving orientation isometry:%
\begin{equation}
{\mathbf{B}}(t,\mathbf{y})=\mathbf{q}(t)+\mathbb{Q}(t)(\mathbf{y}-\mathbf{q}%
(0))\qquad \text{ for any}\quad \mathbf{y}\in S_{0}  \label{is}
\end{equation}%
and the body occupies the set:%
\begin{equation}
S(t)=\{\mathbf{x}\in \mathbb{R}^{N}:\ \,\mathbf{x}=\mathbf{B}(t,\mathbf{y}%
),\quad \mathbf{y}\in S_{0}\}=\mathbf{B}(t,S_{0})  \label{set}
\end{equation}%
at any time $t.$ The velocity of the body, called \textit{\ \textit{rigid }
velocity}, is defined as:
\begin{equation}
\frac{d}{dt}{{\mathbf{B}}(t,\mathbf{y})}=\mathbf{u}_{s}(t,\mathbf{x})=\mathbf{a%
}(t)+\mathbb{P}(t)(\mathbf{x}-\mathbf{q}(t))\qquad \text{for all}\quad
\mathbf{x}\in S(t),  \label{comp}
\end{equation}%
where $\mathbf{a}=\mathbf{a}(t)\in \mathbb{R}^{N}$\ \ is the translation
velocity and $\mathbb{P}=\mathbb{P}(t)$ is the angular velocity. \ The
velocity $\mathbf{u}_{s}$ has to be compatible with $\mathbf{B}$ in the
sense:
\begin{equation}
\frac{d\mathbf{q}}{dt}=\mathbf{a}\quad \text{and}\quad \frac{d\mathbb{Q}}{dt}%
\mathbb{Q}^{T}=\mathbb{P}\quad \text{in}\;[0,T].  \label{comp1}
\end{equation}%
The angular velocity $\mathbb{P}$ is a skew--symmetric matrix, i.e. there
exists a vector $\bm{\omega}=\bm{\omega}(t)\in \mathbb{R}^{N},\mathbb{\ \ }$%
such that
\begin{equation}
\mathbb{P}(t)\mathbf{x}=\bm{\omega}(t)\times \mathbf{x},\qquad \forall
\mathbf{x}\in \mathbb{R}^{N}.  \label{om}
\end{equation}


\noindent We define the fluid domain as $\Omega _{F}(t)=\Omega \setminus
\overline{S(t)}$.
{Finally, we introduce a slight abuse of notation to denote the non-cylindrical domains:
$$
(0,T)\times \Omega_F(t)=\bigcup_{t\in (0,T)}\{t\}\times\Omega_F(t), \quad
(0,T)\times \partial S(t)=\bigcup_{t\in (0,T)}\{t\}\times\partial S(t).
$$}

We consider the following problem
modeling the motion of the rigid body in viscous incompressible fluids.
\newline
Find $(\mathbf{u},p,\mathbf{q},{\mathbb{Q}})$ such that
\begin{equation}
\left.
\begin{array}{l}
{ \varrho} (\partial _{t}\mathbf{u}+(\mathbf{u}\cdot \nabla )\mathbf{u})=\mbox {div}%
\left( {\mathbb{T}}(\mathbf{u},p)\right) , \\
\mbox {div }\mathbf{u}=0%
\end{array}%
\right\} \;\mathrm{in}\; {(0,T)\times\Omega _{F}(t)},  \label{NS}
\end{equation}%
\begin{equation}
\left.
\begin{array}{l}
\frac{d^{2}}{dt^{2}}\mathbf{q}=-\int_{\partial S(t)}{\mathbb{T}}(\mathbf{u}%
,p)\mathbf{n}\,d\mathbf{\gamma }(\mathbf{x}), \\
\frac{d}{dt}({\mathbb{J}}\boldsymbol{\omega })=-\int_{\partial S(t)}(\mathbf{x}-\mathbf{%
q}(t))\times {\mathbb{T}}(\mathbf{u},p)\mathbf{n}\,d\mathbf{\gamma }(\mathbf{%
x})%
\end{array}%
\right\} \;\mathrm{in}\;(0,T),  \label{RigidBody}
\end{equation}%
\begin{equation}
(\mathbf{u}-\mathbf{u}_{s})\cdot \mathbf{n}=0,\qquad \beta (\mathbf{u}_{s}-%
\mathbf{u})\cdot \boldsymbol{\tau }={\mathbb{T}}(\mathbf{u},p)\mathbf{n}%
\cdot \boldsymbol{\tau }\qquad \mathrm{on}\;{(0,T)\times\partial S(t)},  \label{coupling}
\end{equation}%
\begin{equation}
\mathbf{u}(0,.)=\mathbf{u}_{0}\qquad \mathrm{in}\;\Omega ;\qquad \mathbf{q}%
(0)=\mathbf{q}_{0},\qquad \mathbf{q}^{\prime }(0)=\mathbf{a}_{0},\qquad
\boldsymbol{\omega }(0)=\boldsymbol{\omega }_{0},  \label{IC}
\end{equation}%
where $\mathbf{n}($\textbf{$x$}$)$ is the unit \textit{interior} normal at \
$\mathbf{x}\in \partial S(t),$ i.e. the vector $\mathbf{n}$ is directed
inside of $S(t)$, {$\varrho$ is a constant density of fluid},  { and $\beta>0$ a constant friction coefficient of $\partial S$}. The surface measure over a moving surface $\partial S(t)$
is indicated by $d\mathbf{\gamma }.$ $\mathbb{J}$ is the matrix of the
inertia moments of the body $S(t)$ related to its mass center, calculated as:
\begin{equation*}
\mathbb{J}=\int_{S(t)}{\varrho_S(t,\bx)} (|\mathbf{x}-\mathbf{q}(t)|^{2}\mathbb{I}-(\mathbf{x}-%
\mathbf{q}(t))\otimes (\mathbf{x}-\mathbf{q}(t)))\,d\mathbf{x},
\end{equation*}
where $\varrho_S$ is the rigid body density.
In \eqref{NS} $\mathbf{u}$ is the fluid velocity; ${\mathbb{T}}$ is the
stress tensor and $\mathbb{D}$ is the deformation-rate tensor, which are
defined as:
\begin{equation*}
{\mathbb{T}}=-pI+2\mu \,\mathbb{D}\mathbf{u}\qquad \text{and}\qquad \mathbb{D%
}\mathbf{u}=\frac{1}{2}\left( \nabla \mathbf{u}+\left( \nabla \mathbf{u}%
\right) ^{T}\right)
\end{equation*}%
with $p$ being the fluid pressure and $\mu >0$ being the constant viscosity
of the fluid.
For simplicity {we assume that the fluid density $\varrho $ and the mass of rigid body equals $1$.}
\bigskip

Let us introduce the definition of weak solutions for system \eqref{NS}-%
\eqref{IC}. To begin with we define the space \cite{lions,Ne}:%
\begin{equation*}
V^{0,2}(\Omega )=\{\mathbf{v}\in L^{2}(\Omega ):\ \,\mbox{div }\mathbf{v}%
=0\;\ \text{ in}\;\mathcal{D}^{\prime }(\Omega ),\qquad \mathbf{v}\cdot
\mathbf{n}=0\;\ \text{ in}\;H^{-1/2}(\partial \Omega )\},
\end{equation*}%
where $\mathbf{n}$ is the unit normal to the boundary of $\Omega .$ \ Let $%
\mathcal{M}(\Omega )$ be the space of bounded Radon measures. Let%
\begin{equation*}
BD_{0}(\Omega )=\left\{ \mathbf{v}\in L^{1}(\Omega ):\ \,\mathbb{D}\mathbf{v}%
\in \mathcal{M}(\Omega ),\qquad \,\mathbf{v}=0\;\ \ \mbox{   on  }\;\partial
\Omega \right\}
\end{equation*}%
be the space of functions of bounded deformation. Let $S$\ be an open
connected subset of $\Omega $ with the boundary $\partial S\in C^{2}$. We
introduce the following space of vector functions:
\begin{eqnarray*}
KB(S) &=&\left\{ \mathbf{v}\in BD_{0}(\Omega ):\,\ \mathbb{D}\mathbf{v}\in
L^{2}(\Omega \backslash \overline{S}),\qquad \mathbb{D}\mathbf{v}=0\ \ \text{%
a.e. on }S,\right. \\
&&\left. \qquad \quad \quad \qquad \;\;\;\mbox{div}\mathbf{v}=0\ \
\mbox{
in }\ \mathcal{D}^{\prime }(\Omega )\right\} .
\end{eqnarray*}

\bigskip

Now, we can give a definition of the weak solutions of \eqref{NS}-%
\eqref{IC}.

\begin{definition}
\label{definition} The triple $\left\{ \mathbf{B},\mathbf{u}\right\} $ is a
weak solution of system {\eqref{NS}-\eqref{IC}}, if the following conditions
are satisfied:

1) The function $\mathbf{B}(t,\cdot ):\mathbb{R}^{N}\rightarrow \mathbb{R}%
^{N}$ \ is a preserving orientation isometry \eqref{is}, which defines a
time dependent set $S(t)$ by \eqref{set}. The isometry $\mathbf{B}$ is
compatible with $\mathbf{u}=\mathbf{u}_{s}$ on $S(t)$: \ the functions $%
\mathbf{q},$ $\mathbb{Q}$ are absolutely continuous on $[0,T]$\ and satisfy
equalities \eqref{comp}-\eqref{om};


2) The function $\mathbf{u}\in L^{2}(0,T;KB(S(t)))\cap L^{\infty
}(0,T;V^{0,2}(\Omega ))$ satisfies the integral equality:
\begin{align}
\int_{0}^{T}dt\int_{\Omega \backslash \partial S(t)}\{\mathbf{u}\cdot{\partial_t}\boldsymbol{%
\psi }&+(\mathbf{u}\otimes \mathbf{u}) :\mathbb{D}\boldsymbol{\psi }%
-2\mu _{f}\,\mathbb{D}\mathbf{u}:\mathbb{D}\boldsymbol{\psi }\,\}d\mathbf{x}
\notag \\
& =-\int_{\Omega }\mathbf{u}_{0}\cdot\boldsymbol{\psi }(0,\cdot )\,d\mathbf{x}%
+\int_{0}^{T}dt\int_{\partial S(t)}\beta (\mathbf{u}_{s}-\mathbf{u}_{f})\cdot(%
\boldsymbol{\psi }_{s}-\boldsymbol{\psi }_{f})\,d\mathbf{\gamma },
\label{Weak}
\end{align}%
which holds for any test function $\boldsymbol{\psi },$ such that%
\begin{eqnarray}
\boldsymbol{\psi } &\in &L^{2(N-1)}(0,T;KB(S(t))),  \notag \\
{\partial_t}\boldsymbol{\psi } &\in &L^{2}(0,T;L^{2}(\Omega \backslash \partial
S(t))),\qquad \boldsymbol{\psi }(T,\cdot )=0.  \label{test}
\end{eqnarray}%
By $\mathbf{u}_{s}(t,\mathbf{\cdot }),$ $\boldsymbol{\psi }_{s}(t,\mathbf{%
\cdot })$ and $\mathbf{u}_{f}(t,\mathbf{\cdot }),$ $\boldsymbol{\psi }_{f}(t,%
\mathbf{\cdot })$\ \ we denote the trace values of $\mathbf{u},$ $%
\boldsymbol{\psi }$ \ on $\partial S(t)$\ \ from \ the "\textit{rigid}" side
$S(t)$ and the "fluid" side $F(t)$, respectively.
\end{definition}

\bigskip

Let us recall the global solvability result\ proved in \cite{CN}.

\begin{teo}
\label{theorem} Let the boundaries be $\partial \Omega \in C^{0,1}$, $%
\partial {S}_{0}\in C^{2}$. Let us assume that $\overline{S_{0}}\subset
\Omega $ and
\begin{equation*}
\mathbf{u}_{0}\in V^{0,2}(\Omega ),\qquad \mathbb{D}\mathbf{u}_{0}=0\ \quad
\text{in \ }\mathcal{D}^{\prime }(S_{0}).
\end{equation*}%
Then problem {\eqref{NS}-\eqref{IC}}\ \ possesses a weak solution $\left\{
\mathbf{B},\mathbf{u}\right\} ,$ such that the isometry $\mathbf{B}(t,\cdot
) $ is Lipschitz continuous with respect to $t\in \lbrack 0,T],$ \ \
\begin{equation*}
\mathbf{u}\in C_{\mathrm{weak}}(0,T;V^{0,2}(\Omega ))\cap L^{2}(0,T;KB(S(t)))
\end{equation*}%
and for a.a. $t\in (0,T)$ the following inequality holds:
\begin{equation}
\frac{1}{2}\int_{\Omega }|\mathbf{u}|^{2}(t)\ d\mathbf{x}+\int_{0}^{t}dr%
\left\{ \int_{\Omega _{F}({r})}2\mu \,|\mathbb{D}\,\mathbf{u}|^{2}\,\,d\mathbf{%
x}+\int_{\partial S(r)}\beta |\mathbf{u}-\mathbf{u}_{s}|^{2}\ d\mathbf{%
\gamma }\right\} \leq \frac{1}{2}\int_{\Omega }|\mathbf{u}_{0}|^{2}\ d%
\mathbf{x}.  \notag
\end{equation}
\end{teo}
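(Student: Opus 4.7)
The plan is to construct the weak solution as a limit of a penalized problem in which the rigid body is replaced by a highly viscous fluid occupying a time-dependent region $S^\varepsilon(t)$, whose motion is reconstructed from the velocity field by projection onto the space of rigid motions. Concretely, I would solve an incompressible Navier--Stokes system on all of $\Omega$ with viscosity $\mu_\varepsilon = \mu$ on $\Omega_F^\varepsilon(t)$ and $\mu_\varepsilon = \varepsilon^{-1}$ on $S^\varepsilon(t)$, imposing Navier's slip weakly on the current interface via a surface jump penalty, while the body frame $(\mathbf{q}^\varepsilon,\mathbb{Q}^\varepsilon)$ evolves according to \eqref{comp1} with translational and angular velocities $(\mathbf{a}^\varepsilon,\boldsymbol{\omega}^\varepsilon)$ obtained by $L^2$-projection of $\mathbf{u}^\varepsilon|_{S^\varepsilon(t)}$ onto the finite-dimensional space of rigid velocities.

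For each $\varepsilon$, existence at the approximate level follows by a Faedo--Galerkin scheme in $V^{0,2}(\Omega)$ coupled to an ODE for $(\mathbf{q}^\varepsilon,\mathbb{Q}^\varepsilon)$ via a fixed-point argument on a small time interval and standard continuation. Testing with $\mathbf{u}^\varepsilon$ yields the $\varepsilon$-independent energy estimate
\begin{equation*}
\tfrac12\,\|\mathbf{u}^\varepsilon(t)\|_{L^2(\Omega)}^2 + \int_0^t\!\!\int_\Omega 2\mu_\varepsilon |\mathbb{D}\mathbf{u}^\varepsilon|^2\,d\mathbf{x}\,dr + \int_0^t\!\!\int_{\partial S^\varepsilon(r)}\beta\,|\mathbf{u}^\varepsilon-\mathbf{u}_s^\varepsilon|^2\,d\gamma\,dr \le \tfrac12\,\|\mathbf{u}_0\|_{L^2(\Omega)}^2.
\end{equation*}
This gives weak-$*$ compactness in $L^\infty(0,T;V^{0,2}(\Omega))$, weak $L^2$-compactness of $\mathbb{D}\mathbf{u}^\varepsilon$ on the fluid region, and forces $\mathbb{D}\mathbf{u}^\varepsilon \to 0$ strongly in $L^2(S^\varepsilon(t))$, so that any weak limit $\mathbf{u}$ is rigid on the limiting body.

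The hard part will be the compactness required to pass to the limit in the nonlinear convective term and, more delicately, in the surface penalty on the \emph{moving} interface. I would first extract subsequential uniform convergence of $(\mathbf{q}^\varepsilon,\mathbb{Q}^\varepsilon)$ in $C([0,T])$ by Arzel\`a--Ascoli, using the $L^\infty$-bound on the body velocity implied by the energy estimate to obtain uniform Lipschitz continuity in time; the sets $S^\varepsilon(t)$ then converge in Hausdorff distance to $S(t)=\mathbf{B}(t,S_0)$, whose boundary inherits the $C^2$-regularity of $\partial S_0$ via the isometry. Strong $L^2$-compactness of $\mathbf{u}^\varepsilon$ on the fluid region would be obtained by an Aubin--Lions argument after pulling the equations back to the fixed reference geometry using the Lagrangian map $\mathbf{B}^\varepsilon$; this change of variables gives the uniform control on $\partial_t \mathbf{u}^\varepsilon$ in a negative-order space that the non-cylindrical geometry otherwise obstructs. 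Once these convergences are available, passage to the limit in \eqref{Weak} is routine, and the energy inequality for $\{\mathbf{B},\mathbf{u}\}$ follows from weak lower semicontinuity of the dissipation terms on the left-hand side of the $\varepsilon$-level identity.
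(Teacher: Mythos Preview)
The paper does not contain a proof of this theorem at all: it is stated as a recollection of the global solvability result established in \cite{CN}, and no argument is given here. There is therefore nothing in the present paper to compare your proposal against.

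That said, your outline is in the spirit of the penalization strategies used for fluid--rigid body systems (replacing the solid by a highly viscous fluid and recovering rigidity in the limit), and the uniform energy bound you write down is the correct one. A genuine subtlety you gloss over is the meaning and stability of the Navier slip term at the approximate level: if the body is merely a high-viscosity fluid region $S^\varepsilon(t)$, the interface $\partial S^\varepsilon(t)$ is still the image of $\partial S_0$ under the \emph{reconstructed} rigid motion, not the boundary of the actual support of the penalized viscosity, so the surface integral $\int_{\partial S^\varepsilon(r)}\beta|\mathbf{u}^\varepsilon-\mathbf{u}_s^\varepsilon|^2\,d\gamma$ couples the fluid trace to a rigid field that is only approximately equal to $\mathbf{u}^\varepsilon|_{S^\varepsilon}$. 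Passing to the limit in this boundary term requires more than Hausdorff convergence of the sets; one needs uniform control of traces on the moving $C^2$ surfaces and strong $L^2(\partial S^\varepsilon(t))$ convergence of both $\mathbf{u}^\varepsilon$ and $\mathbf{u}_s^\varepsilon$, which in turn hinges on the Korn-type inequality in $KB(S^\varepsilon(t))$ being uniform in $\varepsilon$. The actual construction in \cite{CN} handles precisely these points; if you intend to reproduce the proof, those are the places where the real work lies.
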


\bigskip

\bigskip

Let us introduce on the fluid domain $\Omega _{F}(t)$ for $t\in (0,T)$ the
following function spaces:%
\begin{equation*}
L^{2}(\Omega _{F}^{1}(t)),\qquad L^{\infty }(\Omega _{F}^{1}(t)),\qquad
W^{1,\infty }(\Omega _{F}^{1}(t))\qquad \text{for a.e.\ }t\in (0,T)
\end{equation*}%
and%
\begin{equation*}
L^{2}(0,T;L^{2}(\Omega _{F}(t))),\qquad L^{2}(0,T;H^{k}(\Omega
_{F}(t))),\qquad H^{k}(0,T;L^{2}(\Omega _{F}(t)))\quad \text{with \ }k=1%
\text{ or }2.
\end{equation*}

\bigskip

Also we recall the local existence result for the strong solution obtained
in \cite{ACMN}.

\begin{teo}
\label{theorem-strong} Let the boundaries be $\partial \Omega ,$ $\partial {S%
}_{0}\in C^{2}$. Suppose that $\overline{S_{0}}\subset \Omega  $ {  and}
\begin{equation*}
\mathbf{u}_{0}\in KB(S_{0}).
\end{equation*}%
Then problem {\eqref{NS}-\eqref{IC}}\ possesses a local-in-time strong
solution $(\mathbf{B},\mathbf{u})$, such that the isometry $\mathbf{B}%
(t,\cdot )$ is Lipschitz continuous with respect to $t\in \lbrack 0,T],$ and
there exists a maximal $T_{0}>0$, such that {\eqref{NS}-\eqref{IC}} has a
unique strong solution $(\mathbf{u},p,\mathbf{a}(t),\boldsymbol{\omega }(t))$,
which for all $T<T_{0}$ satisfies the following energy inequality:
\begin{equation*}
\Vert \mathbf{u}\Vert _{L^{2}(0,T;H^{2}(\Omega _{F}(t))}+\Vert p\Vert
_{L^{2}(0,T;H^{1}(\Omega _{F}(t))}+\Vert \mathbf{a}\Vert _{H^{1}(0,T)}+\Vert
\boldsymbol{\omega }\Vert _{H^{1}(0,T)}\leq C.
\end{equation*}%
Here and below we denote by $C$ generic constants depending only on the data
of our problem {\eqref{NS}-\eqref{IC}.}
\end{teo}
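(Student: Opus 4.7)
The plan is to reduce the moving-boundary problem to a problem on the fixed reference domain $F_0=\Omega\setminus\overline{S_0}$ and then solve it by a linearization plus fixed-point argument. First, I would construct a volume-preserving change of variables $\mathbf{X}(t,\cdot):F_0\to\Omega_F(t)$ that coincides with the rigid motion $\mathbf{B}(t,\cdot)$ in a tubular neighborhood of $\partial S_0$ and smoothly interpolates to the identity near $\partial\Omega$. Using a cut-off of the rigid velocity field $\mathbf{u}_s$ and extending divergence-freely, one obtains a diffeomorphism whose Jacobian has determinant one, and which transports the interior normal and tangent fields on $\partial S_0$ to those on $\partial S(t)$. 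Pulling back the unknown velocity $\mathbf{u}$ and pressure $p$ by $\mathbf{X}$ turns \eqref{NS}--\eqref{coupling} into a system on $(0,T)\times F_0$ with modified (time-dependent but smooth) coefficients in the convection and diffusion terms, a forcing depending quadratically on $(\mathbf{a},\boldsymbol{\omega})$, a divergence-free constraint, and Navier conditions on the stationary interface $\partial S_0$; the ODE \eqref{RigidBody} retains its ODE structure after appropriate relabeling.

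Next I would study the corresponding linear problem: the unsteady Stokes system on $F_0$ with Navier slip on $\partial S_0$ and no-slip on $\partial\Omega$, coupled to linear ODEs for $(\mathbf{a},\boldsymbol{\omega})$ through the boundary traction. The goal is to establish maximal $L^2$-regularity, namely that for data $(\mathbf{f},\mathbf{u}_0)$ with $\mathbf{u}_0\in KB(S_0)$ the solution satisfies
\begin{equation*}
\|\mathbf{u}\|_{L^2(0,T;H^2(F_0))}+\|\partial_t\mathbf{u}\|_{L^2(0,T;L^2(F_0))}+\|p\|_{L^2(0,T;H^1(F_0))}+\|\mathbf{a}\|_{H^1}+\|\boldsymbol{\omega}\|_{H^1}\le C\bigl(\|\mathbf{f}\|_{L^2L^2}+\|\mathbf{u}_0\|_{KB}\bigr).
\end{equation*}
The Navier boundary condition is coercive once combined with Korn's inequality on $F_0$, so the associated bilinear form generates an analytic semigroup on the divergence-free subspace of $L^2(\Omega)$ incorporating the rigid-body degrees of freedom; maximal regularity for analytic semigroups on Hilbert spaces then gives the estimate. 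I would treat the pressure by a De Rham/Bogovskii argument adapted to mixed boundary conditions.

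With the linear theory in hand, I would set up a fixed-point map $\Phi$ on a ball in the space
\begin{equation*}
\mathcal{X}_T=\{(\mathbf{v},\mathbf{b},\bm{\xi}):\mathbf{v}\in L^2(0,T;H^2)\cap H^1(0,T;L^2),\ \mathbf{b},\bm{\xi}\in H^1(0,T)\},
\end{equation*}
by solving the linear problem with the nonlinear terms, the geometric nonlinearities coming from the change of variables, and the boundary data evaluated at a given iterate. Standard product and trace estimates show that each nonlinearity is at least quadratic or carries a positive power of $T$, so for $T$ sufficiently small $\Phi$ maps a ball into itself and is a strict contraction; the Banach fixed-point theorem yields the unique local strong solution. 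Uniqueness on the interval of existence follows from a Gronwall estimate on the difference of two solutions, once again using Korn's inequality on the slipping interface. The maximal existence time $T_0$ is then obtained by the usual continuation argument.

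The hard part, in my view, is the linear maximal-regularity step with the Navier condition on a moving (after coefficient-freezing, merely time-dependent) interface, because the slip condition mixes the fluid velocity with the rigid velocity in a way that couples the Stokes operator to the ODEs at leading order. Verifying the correct functional setting so that the combined operator is self-adjoint (or sectorial) and getting estimates uniform with respect to the frozen rigid variables—so that the contraction constant can be made less than one uniformly in small $T$—is where most of the work lies.
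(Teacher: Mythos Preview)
The paper does not prove this theorem; it is merely stated as a recall of the local existence result obtained in \cite{ACMN}. There is therefore no proof in the present paper against which to compare your attempt.

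That said, your outline follows the standard strategy for strong solutions of fluid--rigid body systems: transform to a fixed reference domain via a volume-preserving diffeomorphism extending the rigid motion, establish maximal $L^2$-regularity for the linearized coupled Stokes--ODE problem, and close by a contraction argument on a small time interval. This is precisely the programme of \cite{T,GGH} on which \cite{ACMN} is built, and the Appendix of the present paper reproduces the relevant change of variables from \cite{IW,T}. Your identification of the principal difficulty---that the Navier slip condition couples the Stokes operator to the rigid-body ODEs at leading order, so one must verify sectoriality (or self-adjointness) of the combined operator on a space carrying both fluid and rigid degrees of freedom---is accurate and is indeed where the work in \cite{ACMN} lies. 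Your sketch is sound and essentially the expected one; a full proof would of course have to carry out the maximal-regularity step with the slip condition in detail.
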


\bigskip

The aim of our paper is to prove the weak-strong uniqueness result for
system \eqref{NS}-\eqref{IC}. More precisely, we prove that on the interval $%
(0,T_{0})$ where the strong solution exists, the strong solution is unique
in the class of weak solutions given by Definition \ref{definition}. To the
best of our knowledge this is the first weak-strong uniqueness in the area of
fluid-structure interaction. Uniqueness of weak solutions in $2D$ case for
the fluid-rigid body system with no-slip coupling condition was proved in
\cite{GS15}. 

Let us end this section by a well-known\textbf{\ }\textit{Reynolds transport
theorem} from the fluid mechanics theory, which will be often used in our
calculations on moving domains.

\begin{lemma}
\label{Rey} Let $V(t)$ be a time dependent volume moved by a smooth {divergence-free} velocity
$\mathbf{v}=\mathbf{v}(t,\mathbf{x})$. Then
\begin{equation}
\frac{d}{dt}\int_{V(t)}f(t,\mathbf{x})\,dx=\int_{V(t)}\dot{f}\,d%
\mathbf{x}  \label{eq6}
\end{equation}%
for any smooth function $f=f(t,\mathbf{x})$. Here $\dot{f}=\frac{%
\partial f}{\partial t}+(\mathbf{v}\cdot \nabla )f$ is the total time
derivative.
\end{lemma}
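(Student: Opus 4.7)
The plan is to pull the moving-domain integral back to a fixed reference configuration via the Lagrangian flow of $\mathbf{v}$, and then exploit Liouville's theorem for divergence-free vector fields. Concretely, I would introduce the flow $\mathbf{X}(t,\mathbf{y})$ defined by $\partial_t \mathbf{X}(t,\mathbf{y}) = \mathbf{v}(t,\mathbf{X}(t,\mathbf{y}))$ with $\mathbf{X}(0,\mathbf{y}) = \mathbf{y}$, so that $V(t) = \mathbf{X}(t,V(0))$ by the very definition of a ``volume moved by $\mathbf{v}$''.

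Setting $J(t,\mathbf{y}) = \det \nabla_{\mathbf{y}} \mathbf{X}(t,\mathbf{y})$, Jacobi's formula gives $\partial_t J = (\mbox{div}\,\mathbf{v})(t,\mathbf{X})\,J$. Since $\mathbf{v}$ is divergence-free and $J(0,\mathbf{y}) = 1$, this ODE yields $J(t,\mathbf{y}) \equiv 1$, i.e., the flow is volume-preserving. The substitution $\mathbf{x} = \mathbf{X}(t,\mathbf{y})$ then recasts the moving-domain integral on the fixed set $V(0)$:
$$\int_{V(t)} f(t,\mathbf{x})\, d\mathbf{x} = \int_{V(0)} f(t,\mathbf{X}(t,\mathbf{y}))\, d\mathbf{y}.$$

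Next, since the domain of integration is now frozen, I may differentiate under the integral sign, which is justified by the smoothness hypotheses on $f$ and $\mathbf{v}$ (and hence on $\mathbf{X}$). The chain rule gives
$$\frac{d}{dt} f(t,\mathbf{X}(t,\mathbf{y})) = (\partial_t f)(t,\mathbf{X}) + \mathbf{v}(t,\mathbf{X})\cdot (\nabla f)(t,\mathbf{X}) = \dot f(t,\mathbf{X}(t,\mathbf{y})),$$
and undoing the change of variables, again using $J \equiv 1$, produces exactly $\int_{V(t)} \dot f\, d\mathbf{x}$, which is the claimed identity.

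The only ingredient requiring any real work is Jacobi's formula for $\partial_t \det \nabla_{\mathbf{y}} \mathbf{X}$, but this is a standard computation obtained by differentiating the determinant via Laplace cofactor expansion and using $\partial_t (\nabla_{\mathbf{y}} \mathbf{X}) = (\nabla_{\mathbf{x}} \mathbf{v})(t,\mathbf{X})\,\nabla_{\mathbf{y}}\mathbf{X}$, which follows by interchanging $\partial_t$ and $\nabla_{\mathbf{y}}$ in the flow equation. No genuine obstacle appears; the lemma is classical and the argument is purely technical.
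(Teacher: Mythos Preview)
Your argument is correct and is in fact the standard proof of the Reynolds transport theorem in the divergence-free case: pull back via the Lagrangian flow, use Liouville's theorem to get $J\equiv 1$, differentiate under the integral, and change variables back. There is no gap.

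However, there is nothing to compare it to: the paper does \emph{not} prove Lemma~\ref{Rey}. It is stated there as a ``well-known Reynolds transport theorem from the fluid mechanics theory'' and is used as a black box in the subsequent computations (e.g.\ in the proof of Corollary~\ref{weak2} and Lemma~\ref{weak_strong}). So your proposal supplies a proof where the paper simply invokes a classical fact without argument.
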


\bigskip


\section{Weak-strong uniqueness}

Let $(\mathbf{u}_{1},\mathbf{a}_{1},\boldsymbol{\omega }_{1})$ be the
triplet consisting of the fluid velocity, the translation rigid body
velocity and the angular rigid body velocity connected to the { {\it weak solution}} $%
(\mathbf{u}_{1},\mathbf{B}_{1})$ (see Definition \ref{definition}), i.e.
\begin{equation*}
\mathbf{B}_{1}(t,\mathbf{y})=\mathbf{q}_{1}(t)+{\mathbb{Q}}_{1}(t)(\mathbf{y}%
-\mathbf{q}_{0}),
\end{equation*}%
where $\mathbf{a}_{1}=\mathbf{q}_{1}^{\prime }(t)\in \mathbb{R}^{3}$ and {  the vector } $%
\boldsymbol{\omega }_{1}=\boldsymbol{\omega }_{1}(t)\in \mathbb{R}^{3}$ \ is
associated with the skew--symmetric matrix $\mathbb{P}_{1}={\mathbb{Q}}%
_{1}^{\prime }{\mathbb{Q}}_{1}^{T}$, satisfying the property:
\begin{equation}
\boldsymbol{\omega }_{1}(t)\times \mathbf{x}=\mathbb{P}_{1}(t)\mathbf{x}%
,\qquad \forall \mathbf{x}\in \mathbb{R}^{3}.  \label{1}
\end{equation}%
{ We denote the domain of the rigid body at the time $t$ by
\begin{equation*}
S_{1}(t)=\mathbf{B}_{1}(t,S_{0})
\end{equation*}%
 and the corresponding fluid domain by
\begin{equation*}
\Omega _{F}^{1}(t)=\Omega \setminus \overline{S_{1}(t)}.
\end{equation*}
}

Moreover, let $(\mathbf{u}_{2},\mathbf{a}_{2},\boldsymbol{\omega }_{2})$ be
the { {\it strong solution} } given by Theorem \ref{theorem-strong} with the
corresponding rigid deformation $\mathbf{B}_{2}$:
\begin{equation*}
\mathbf{B}_{2}(t,\mathbf{y})=\mathbf{q}_{2}(t)+{\mathbb{Q}}_{2}(t)(\mathbf{y}%
-\mathbf{q}_{0}),
\end{equation*}%
where $\mathbf{a}_{2}=\mathbf{q}_{2}^{\prime }$ and
{  the vector } $\boldsymbol{\omega }%
_{2} $ is associated with the skew--symmetric matrix $\mathbb{P}_{2}={%
\mathbb{Q}}_{2}^{\prime }{\mathbb{Q}}_{2}^{T}$, satisfying
\begin{equation}
\boldsymbol{\omega }_{2}(t)\times \mathbf{x}=\mathbb{P}_{2}(t)\mathbf{x}%
,\qquad \forall \mathbf{x}\in \mathbb{R}^{3}.  \label{2}
\end{equation}%
Also as before, { we denote the domain of the rigid body at the time $t$ by
\begin{equation*}
S_{2}(t)=\mathbf{B}_{2}(t,S_{0})
\end{equation*}%
 and the corresponding fluid domain by
\begin{equation*}
\Omega _{F}^{2}(t)=\Omega \setminus \overline{S_{2}(t)}.
\end{equation*}%
}

In this article our main objective is to demonstrate the following
weak-strong uniqueness theorem.

\begin{teo}
\label{WeakStrong} We will prove the weak-strong uniqueness result, i.e.
\begin{equation*}
(\mathbf{u}_{1},\mathbf{a}_{1},\boldsymbol{\omega }_{1})=(\mathbf{u}_{2},%
\mathbf{a}_{2},\boldsymbol{\omega }_{2})\qquad \text{on the time interval }%
(0,T_{0}),
\end{equation*}%
where the strong solution $(\mathbf{u}_{2},\mathbf{a}_{2},\boldsymbol{\omega
}_{2})$ exists.
\end{teo}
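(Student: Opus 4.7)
The plan is to run a relative-energy (relative kinetic energy) argument in the spirit of the classical Leray--Serrin proof of weak-strong uniqueness for the Navier--Stokes system, adapted to the coupled fluid--rigid body problem with Navier's slip condition. The structural obstruction is geometric: the weak solution $(\mathbf{u}_1,\mathbf{B}_1)$ and the strong solution $(\mathbf{u}_2,\mathbf{B}_2)$ live on two distinct moving fluid domains $\Omega_F^1(t)$ and $\Omega_F^2(t)$, so that $\mathbf{u}_2$, being rigid on $S_2(t)$ rather than on $S_1(t)$, cannot be inserted directly as a test function $\boldsymbol{\psi}$ into the weak formulation \eqref{Weak}. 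The first step is therefore to construct, on some interval $[0,T_1]\subseteq[0,T_0]$, a volume-preserving diffeomorphism $\mathbf{T}(t,\cdot):\overline{\Omega}\to\overline{\Omega}$ which coincides with the rigid motion $\mathbf{B}_1(t,\cdot)\circ\mathbf{B}_2(t,\cdot)^{-1}$ on a neighbourhood of $\overline{S_2(t)}$ (so that it carries $S_2(t)$ onto $S_1(t)$) and with the identity on a neighbourhood of $\partial\Omega$. Such a $\mathbf{T}$ is obtained by a standard cut-off/flow construction, made divergence-free either via a Piola transform or by a Bogovskii correction on the annular transition region; the regularity $(\mathbf{a}_2,\boldsymbol{\omega}_2)\in H^1(0,T_0)$ and the fact that $S_2(t)$ stays strictly inside $\Omega$ locally in time are what make this possible.

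Using $\mathbf{T}$, the second step is to define the transported strong velocity $\hat{\mathbf{u}}_2(t,\mathbf{x})$, equal to the rigid velocity $\mathbf{a}_2(t)+\boldsymbol{\omega}_2(t)\times(\mathbf{x}-\mathbf{q}_1(t))$ on $\overline{S_1(t)}$ and equal to the Piola push-forward of $\mathbf{u}_2$ by $\mathbf{T}$ on $\Omega_F^1(t)$, chosen so that $\hat{\mathbf{u}}_2\in KB(S_1(t))$ with sufficient time regularity to be admissible in \eqref{Weak}. Plugging $\boldsymbol{\psi}=\hat{\mathbf{u}}_2$ into \eqref{Weak}, adding the weak energy inequality of Theorem \ref{theorem}, and subtracting the energy identity satisfied by the strong solution (obtained by testing \eqref{NS}--\eqref{RigidBody} pointwise with $\mathbf{u}_2$ and summing the fluid and rigid-body contributions), I expect, after systematic use of the Reynolds transport theorem (Lemma \ref{Rey}), a relative-energy inequality of the form
\begin{equation*}
\frac{d}{dt}E(t)+D(t)\le F(t)\,E(t),\qquad E(t)=\tfrac12\int_{\Omega}|\mathbf{u}_1-\hat{\mathbf{u}}_2|^2(t,\mathbf{x})\,d\mathbf{x},
\end{equation*}
in which $E$ encodes at once the $L^2$ discrepancy of the fluid velocities and the kinetic-energy gap of the two rigid bodies, $D$ collects the viscous dissipation $2\mu\int_{\Omega_F^1(t)}|\mathbb{D}(\mathbf{u}_1-\hat{\mathbf{u}}_2)|^2$ and the friction dissipation $\beta\int_{\partial S_1(t)}|(\mathbf{u}_1-\hat{\mathbf{u}}_2)_s-(\mathbf{u}_1-\hat{\mathbf{u}}_2)_f|^2$, and $F\in L^1(0,T_1)$ is controlled exclusively by strong-solution norms provided by Theorem \ref{theorem-strong}. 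A Gronwall argument then gives $E\equiv 0$, hence $\mathbf{u}_1=\hat{\mathbf{u}}_2$, $\mathbf{a}_1=\mathbf{a}_2$, $\boldsymbol{\omega}_1=\boldsymbol{\omega}_2$ on $[0,T_1]$; since $\mathbf{T}$ reduces to the identity as soon as the two bodies coincide, a continuation argument extends the identity to all of $(0,T_0)$.

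The step I expect to be the main obstacle is the book-keeping of the geometric error terms produced by $\mathbf{T}$. The remainders $\mathbb{D}\hat{\mathbf{u}}_2-(\mathbb{D}\mathbf{u}_2)\circ\mathbf{T}^{-1}$, the commutator of $\partial_t$ with the transport, and the mismatch between the surface integrals on $\partial S_1(t)$ and on $\partial S_2(t)$ each produce contributions of size $\|\mathbf{T}-\mathrm{id}\|\lesssim|\mathbf{q}_1-\mathbf{q}_2|+|\mathbb{Q}_1-\mathbb{Q}_2|$, which in turn are bounded by $\int_0^t E(s)^{1/2}\,ds$ and therefore absorbable into $F\,E$ after Cauchy--Schwarz and Young's inequality. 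Handling the Navier friction term is another subtlety: since it is integrated over $\partial S_1(t)$ while the strong coupling \eqref{coupling} lives on $\partial S_2(t)$, one must transport the strong slip condition via $\mathbf{T}$ and then exploit the positivity of $\beta$ together with trace and Korn inequalities on $KB(S_1(t))$ to absorb the surface cross-terms into $D$. Finally, the convective term $\int(\mathbf{u}_1\otimes\mathbf{u}_1):\mathbb{D}\hat{\mathbf{u}}_2$ is handled by Sobolev embedding applied to the strong solution: $\mathbf{u}_2\in L^2(0,T_1;H^2)\cap L^\infty(0,T_1;H^1)$ provides in dimension three a Prodi--Serrin-type control (for instance $\mathbf{u}_2\in L^2(L^\infty)$ via $H^2\hookrightarrow L^\infty$) which, together with Korn's inequality, allows the convective cross-term to be absorbed into $D$ and the Gronwall loop to close.
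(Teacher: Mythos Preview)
Your proposal is correct and follows essentially the same route as the paper: transport the strong solution onto the weak solution's moving domain by a volume-preserving diffeomorphism that is the rigid motion $\mathbf{B}_1\circ\mathbf{B}_2^{-1}$ near the body and the identity near $\partial\Omega$, derive the equations satisfied by the transported strong solution with geometric remainder terms, bound those remainders by $|\mathbf{q}_1-\mathbf{q}_2|+|\mathbb{Q}_1-\mathbb{Q}_2|$ (hence by the rigid kinetic-energy gap), handle the convective cross-term via the Prodi--Serrin regularity of $\mathbf{u}_2$, and close with Gronwall. One minor slip: the Piola push-forward of the rigid velocity $\mathbf{u}_s^2$ onto $S_1(t)$ is not $\mathbf{a}_2+\boldsymbol{\omega}_2\times(\mathbf{x}-\mathbf{q}_1)$ but rather $\mathbb{Q}^T\mathbf{a}_2+(\mathbb{Q}^T\boldsymbol{\omega}_2)\times(\mathbf{x}-\mathbf{q}_1)$ with $\mathbb{Q}=\mathbb{Q}_2\mathbb{Q}_1^T$, which is exactly how the paper defines $\mathbf{A}_2,\boldsymbol{\Omega}_2$; this rotation is what makes the transported slip condition and the rigid-body ODEs take a clean form and produces the extra $\widetilde{\boldsymbol{\omega}}\times(\cdot)$ terms that you will need to absorb.
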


\bigskip

The demonstration of this theorem we divide on few steps, proving auxiliary
Lemmas \ref{Strong1}-\ref{weak_strong}. The major difficulty in the study of
this uniqueness result consists from the fact that the fluid domains of $%
\mathbf{u}_{1}$ and $\mathbf{u}_{2}$ are a priori different and therefore we
need to transform the strong solution to the fluid domain of the weak
solution in order to compare them.

Let $\mathbf{X}_{1}$ and $\mathbf{X}%
_{2} $ be two time dependent changes of variables defined in Appendix \ref%
{Sec:LT}. \ Furthermore we define the inverse transform of $\mathbf{X}_{2}$,
i.e.
\begin{equation*}
\mathbf{Y}_{2}(t,\cdot )=\mathbf{X}_{2}(t,\cdot )^{-1}.
\end{equation*}%
It is easy to see
\begin{equation*}
\mathbf{Y}_{2}(t,\mathbf{x}_{2})=\mathbf{q}_{0}+{\mathbb{Q}}_{2}^{T}(t)(%
\mathbf{x}_{2}-\mathbf{q}_{2}(t)),\qquad \mathbf{x}_{2}\in \partial S_{2}(t).
\end{equation*}%
Finally we define the transformation $\widetilde{\mathbf{X}}_{1}:\Omega
_{F}^{2}(t)\rightarrow \Omega _{F}^{1}(t)$ in the following way:
\begin{equation*}
\widetilde{\mathbf{X}}_{1}(t,\mathbf{x}_{2})=\mathbf{X}_{1}(t,\mathbf{Y}%
_{2}(t,\mathbf{x}_{2}))
\end{equation*}%
and let $\widetilde{\mathbf{X}}_{2}(t,\mathbf{x}_{1})$ be its inverse, i.e.
\begin{equation*}
\widetilde{\mathbf{X}}_{2}(t,\cdot )=\widetilde{\mathbf{X}}_{1}(t,\cdot
)^{-1}.
\end{equation*}%
In the neighborhoods of $S_{1}(t)$ and $S_{2}(t)$ \ the transformations $%
\widetilde{\mathbf{X}}_{2}$ \ and $\widetilde{\mathbf{X}}_{1}$ are rigid.
They are given with the following expressions:%
\begin{equation}
\left\{
\begin{array}{c}
\widetilde{\mathbf{X}}_{1}(t,\mathbf{x}_{2})=\mathbf{q}_{1}(t)+{\mathbb{Q}}%
_{1}(t){\mathbb{Q}}_{2}^{T}(t)(\mathbf{x}_{2}-\mathbf{q}_{2}(t))\quad
\mathrm{in\;the\;neighborhood\;of\;}S_{2}(t), \\
\\
\widetilde{\mathbf{X}}_{2}(t,\mathbf{x}_{1})=\mathbf{q}_{2}(t)+{\mathbb{Q}}%
_{2}(t){\mathbb{Q}}_{1}^{T}(t)(\mathbf{x}_{1}-\mathbf{q}_{1}(t))\quad
\mathrm{in\;the\;neighborhood\;of\;}S_{1}(t).%
\end{array}%
\right.  \label{RigidTransf}
\end{equation}%
Furthermore we put ${\mathbb{Q}}={\mathbb{Q}}_{2}{\mathbb{Q}}_{1}^{T}$. \
Now we define a transformed solution of the strong solution $(\mathbf{u}%
_{2},p_{2},\mathbf{a}_{2},\boldsymbol{\omega }_{2}),$ \ where $p_{2}$ is a
respective pressure (see Appendix \ref{Sec:LT}):%
\begin{equation}
\left\{
\begin{array}{l}
\mathbf{U}_{2}(t,\mathbf{x}_{1})={\mathcal{J}_{\widetilde{\mathbf{X}}_{1}(t,\widetilde{\mathbf{%
X}}_{2}(t,\mathbf{x}_{1}))}\mathbf{u}_{2}(t,\widetilde{\mathbf{X}}_{2}(t,\mathbf{x}%
_{1})),}\qquad P_{2}(t,\mathbf{x}_{1})=p_{2}(t,\widetilde{\mathbf{X}}_{2}(t,%
\mathbf{x}_{1})), \\
\\
\mathbf{A}_{2}(t)={\mathbb{Q}}^{T}(t)\mathbf{a}_{2}(t),\qquad \qquad \quad
\boldsymbol{\Omega }_{2}(t)={\mathbb{Q}}^{T}(t)\boldsymbol{\omega }_{2}(t)%
\end{array}%
\right. \label{eq:Transformed}
\end{equation}%
with $ \mathcal{J}_{\widetilde{\mathbf{X}}_{1}(t,\widetilde{\mathbf{%
X}}_{2}(t,\mathbf{x}_{1}))} =\frac{\partial \widetilde{\mathbf{X}}_{1_i}}{\partial \mathbf{x}_{2_j}}$.
Using formulas \eqref{RigidTransf} we see that the following equalities hold
(see also \cite{GGH}):
\begin{equation}
\mathbf{n}_{1}={\mathbb{Q}}^{T}\mathbf{n}_{2},\qquad \mathbb{T}(\mathbf{u}%
_{2},p_{2})\mathbf{n}_{2}={\mathbb{Q}}{\mathcal{T}}(\mathbf{U}_{2},P_{2})%
\mathbf{n}_{1},  \label{Geom}
\end{equation}%
{where the transformed stress tensor $\mathcal{T}$ is defined in Appendix.}
Let us compute how the slip boundary condition is transformed, using the
transformed solution \eqref{eq:Transformed}. Let $\mathbf{u}_{s}^{1}$, $%
\mathbf{u}_{s}^{2}$ be the velocity of the bodies $S_{1}(t)$ and $S_{2}(t)$,
respectively (see \eqref{comp}). We define the transformed rigid velocity:
\begin{equation*}
\mathbf{U}_{s}^{2}(t,\mathbf{x}_{1})={\mathbb{Q}}^{T}\mathbf{u}_{s}^{2}(t,%
\widetilde{\mathbf{X}}_{2}(t,\mathbf{x}_{1}))=\mathbf{A}_{2}(t)+\boldsymbol{\Omega }%
_{2}(t)\times (\mathbf{x}_{1}-\mathbf{q}_{1}(t)).
\end{equation*}%
We use \eqref{Geom} to verify that $\mathbf{U}_{2}$ satisfies the slip
boundary condition:%
\begin{equation*}
\left\{
\begin{array}{l}
\mathbf{U}_{2}(t,\mathbf{x}_{1})\cdot \mathbf{n}_{1}={\mathbb{Q}}^{T}\mathbf{%
u}_{2}(t,\mathbf{x}_{2})\cdot \mathbf{n}_{1}=\mathbf{u}_{2}(t,\mathbf{x}%
_{2})\cdot \mathbf{n}_{2} \\
\qquad \qquad \qquad =\mathbf{u}_{s}^{2}(t,\mathbf{x}_{2})\cdot \mathbf{n}%
_{2}=\mathbf{U}_{s}^{2}(t,\mathbf{x}_{1})\cdot \mathbf{n}_{1}\ \quad \quad
\mathrm{on}\;\partial S_{1}(t), \\
\\
\mathcal{T}(\mathbf{U}_{2},P_{2})\mathbf{n}_{1}\cdot \boldsymbol{\tau }_{1}={%
\mathbb{Q}}^{T}\mathbb{T}(\mathbf{u}_{2},p_{2})\mathbf{n}_{2}\cdot {\mathbb{Q%
}}^{T}\boldsymbol{\tau }_{2} \\
\qquad \qquad \qquad \,\quad =\beta (\mathbf{u}_{s}^{2}-\mathbf{u}_{2})\cdot
\boldsymbol{\tau }_{2}=\beta (\mathbf{U}_{s}^{2}-\mathbf{U}_{2})\cdot
\boldsymbol{\tau }_{1}\quad \quad \mathrm{on}\;\partial S_{1}(t).%
\end{array}%
\right.
\end{equation*}

Now we can prove the following lemma.

\begin{lemma}
\label{Strong1} The transformed solution $(\mathbf{U}_{2},P_{2},\mathbf{A}%
_{2},\boldsymbol{\Omega }_{2})$\ of the strong solution $(\mathbf{u}%
_{2},p_{2},\mathbf{a}_{2},\boldsymbol{\omega }_{2}),$\ defined by %
\eqref{eq:Transformed}, satisfy the following system of equations on the
fluid domain $\Omega _{F}^{1}(t)$:%
\begin{equation}
\left.
\begin{array}{l}
\partial _{t}\mathbf{U}_{2}+(\mathbf{U}_{2}\cdot \nabla )\mathbf{U}%
_{2}-{\mu}\triangle \mathbf{U}_{2}+\nabla P_{2}={\mu}(\mathcal{L}-\triangle )\mathbf{U}%
_{2}-\mathcal{M}\mathbf{U}_{2} \\
\qquad \qquad \qquad \qquad \qquad \qquad \qquad \qquad -\tilde{\mathcal{N}}%
\mathbf{U}_{2}-(G{-}\nabla )P_{2}, \\
\mbox {\rm div }\mathbf{U}_{2}=0%
\end{array}%
\right\} \;\mathrm{in}\;(0,T)\times \Omega _{F}^{1}(t),
\label{transformedNS}
\end{equation}%
\begin{equation}
\left.
\begin{array}{l}
(\mathbf{U}_{2}-\mathbf{U}_{s}^{2})\cdot \mathbf{n}_{1}=0, \\
\mathcal{T}(\mathbf{U}_{2},P_{2})\mathbf{n}_{1}\cdot \boldsymbol{\tau }%
_{1}=\beta (\mathbf{U}_{s}^{2}-\mathbf{U}_{2})\cdot \boldsymbol{\tau }_{1}%
\end{array}%
\right\} \;\mathrm{on}\;(0,T)\times\partial S_{1}(t),
\label{transformedSlip}
\end{equation}%
\begin{align}
\mathbf{A}_{2}^{\prime }& =-\widetilde{\boldsymbol{\omega }}\times \mathbf{A}%
_{2}-\int_{\partial S_{1}(t)}\mathcal{T}(\mathbf{U}_{2},P_{2})\mathbf{n}%
_{1}\,d\mathbf{\gamma }(\mathbf{x}_{1})\qquad \mathrm{in}\;(0,T),
\label{transformedNewt} \\
(J_{1}\boldsymbol{\Omega }_{2})^{\prime }& =-\widetilde{\boldsymbol{\omega }}%
\times (J_{1}\boldsymbol{\Omega }_{2})-\int_{\partial S_{1}(t)}\left\{ (%
\mathbf{x}_{1}-\mathbf{q}_{1}(t))\times {\mathcal{T}}(\mathbf{U}_{2},P_{2})%
\mathbf{n}_{1}\right\} ~d\mathbf{\gamma }(\mathbf{x}_{1})\qquad \mathrm{in}%
\;(0,T),  \label{transformedAng}
\end{align}%
where the matrix $J_{1}$ and the vector $\widetilde{\boldsymbol{\omega }}$
are defined by
\begin{equation}
J_{1}={\mathbb{Q}}^{T}J_{2}{\mathbb{Q}}\ \qquad \text{and}\ \qquad
\widetilde{\boldsymbol{\omega }}\times \mathbf{x}={\mathbb{Q}}^{T}{\mathbb{Q}%
}^{\prime }\mathbf{x},  \label{w}
\end{equation}
{ and the operators $\mathcal{L}$, $\tilde{\mathcal{N}}$, $\mathcal{M}$ and $\mathcal{G}$ are defined in the Appendix, \eqref{OpL}-\eqref{OpP}.}
\end{lemma}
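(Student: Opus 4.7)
The plan is to derive \eqref{transformedNS}--\eqref{transformedAng} by pulling back the strong-solution equations from $\Omega_F^2(t)$ to $\Omega_F^1(t)$ through the change of variables $\mathbf{x}_2=\widetilde{\mathbf{X}}_2(t,\mathbf{x}_1)$, and tracking the extra terms produced both by the non-trivial Jacobian of $\widetilde{\mathbf{X}}_2$ away from the body and by the time dependence of the transformation. The Piola-type factor $\mathcal{J}_{\widetilde{\mathbf{X}}_1}$ in the definition of $\mathbf{U}_2$ is chosen precisely so that $\divg \mathbf{U}_2=0$ on $\Omega_F^1(t)$ whenever $\divg \mathbf{u}_2=0$ on $\Omega_F^2(t)$, which takes care of the incompressibility constraint in \eqref{transformedNS}.

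For the momentum equation I would apply the chain rule term by term to $(\mathbf{u}_2,p_2)$ evaluated at $\widetilde{\mathbf{X}}_2$. The Laplacian pulls back to $\mathcal{L}\mathbf{U}_2$, so that $\mu\triangle\mathbf{u}_2$ becomes $\mu\triangle\mathbf{U}_2+\mu(\mathcal{L}-\triangle)\mathbf{U}_2$; the convective derivative becomes $(\mathbf{U}_2\cdot\nabla)\mathbf{U}_2+\tilde{\mathcal{N}}\mathbf{U}_2$; the pressure gradient becomes $\nabla P_2+(G-\nabla)P_2$; and the time derivative, using $\partial_t\widetilde{\mathbf{X}}_2$ and the derivative of the Piola factor, generates $\partial_t\mathbf{U}_2+\mathcal{M}\mathbf{U}_2$. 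These are exactly the operators isolated in the Appendix, so \eqref{transformedNS} follows as soon as the bookkeeping is carried out carefully. The boundary conditions \eqref{transformedSlip} are then a short calculation from \eqref{Geom}, from the rigid form \eqref{RigidTransf} of the transformation near $\partial S_1(t)$ where $\mathcal{J}_{\widetilde{\mathbf{X}}_1}=\mathbb{Q}^T$, and from $\mathbf{U}_s^2=\mathbb{Q}^T\mathbf{u}_s^2\circ\widetilde{\mathbf{X}}_2$; this is essentially the computation already displayed just before the statement of the lemma.

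The main obstacle, and the step I expect to be most delicate, is the derivation of the transformed Newton and Euler equations \eqref{transformedNewt}--\eqref{transformedAng} with the gyroscopic corrections $\widetilde{\boldsymbol{\omega}}\times\mathbf{A}_2$ and $\widetilde{\boldsymbol{\omega}}\times(J_1\boldsymbol{\Omega}_2)$. Differentiating $\mathbf{A}_2=\mathbb{Q}^T\mathbf{a}_2$ produces
\[
\mathbf{A}_2'=(\mathbb{Q}^T)'\mathbf{a}_2+\mathbb{Q}^T\mathbf{a}_2'
=-\mathbb{Q}^T\mathbb{Q}'\mathbb{Q}^T\mathbf{a}_2+\mathbb{Q}^T\mathbf{a}_2',
\]
and by definition \eqref{w} of $\widetilde{\boldsymbol{\omega}}$ the first piece is $-\widetilde{\boldsymbol{\omega}}\times\mathbf{A}_2$. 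The second piece is $\mathbb{Q}^T$ times the hydrodynamic force in \eqref{RigidBody}, which by \eqref{Geom} and a change of surface variable from $\partial S_2(t)$ to $\partial S_1(t)$ becomes $-\int_{\partial S_1(t)}\mathcal{T}(\mathbf{U}_2,P_2)\mathbf{n}_1\,d\boldsymbol{\gamma}$. An analogous, lengthier computation for $J_1\boldsymbol{\Omega}_2$, in which one combines the definition $J_1=\mathbb{Q}^TJ_2\mathbb{Q}$, the derivative of $J_2$ (via Reynolds on $S_2(t)$, Lemma~\ref{Rey}), and the identity $\mathbb{Q}^T(\mathbf{v}\times\mathbf{w})=(\mathbb{Q}^T\mathbf{v})\times(\mathbb{Q}^T\mathbf{w})$ valid because $\mathbb{Q}\in SO(3)$, yields \eqref{transformedAng}.

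In summary, the proof is a systematic chain-rule computation, with the truly non-routine part being the algebra of rotation tensors in the angular-momentum equation; everything else is a direct consequence of the definitions \eqref{eq:Transformed}, the rigid form \eqref{RigidTransf} of the transformations near the bodies, and the definitions of the correction operators $\mathcal{L}$, $\tilde{\mathcal{N}}$, $\mathcal{M}$, $\mathcal{G}$ given in the Appendix.
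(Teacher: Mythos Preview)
Your proposal is correct and follows essentially the same approach as the paper: the fluid equation \eqref{transformedNS} and the slip condition \eqref{transformedSlip} are declared ``standard'' and deferred to the Appendix and the literature, the linear-momentum equation \eqref{transformedNewt} is obtained exactly by your computation $\mathbf{A}_2'=(\mathbb{Q}^T)'\mathbf{a}_2+\mathbb{Q}^T\mathbf{a}_2'=-\widetilde{\boldsymbol{\omega}}\times\mathbf{A}_2+\mathbb{Q}^T\mathbf{a}_2'$, and the surface integral is converted via \eqref{Geom}.

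The one place where the paper takes a shorter path than you propose is \eqref{transformedAng}. You plan to differentiate $J_2$ itself via the Reynolds transport theorem on $S_2(t)$ and then combine with the derivatives of $\mathbb{Q}$ and $\boldsymbol{\omega}_2$. The paper avoids touching $J_2'$ altogether: it observes that $J_2\boldsymbol{\omega}_2=\mathbb{Q}J_1\mathbb{Q}^T\boldsymbol{\omega}_2=\mathbb{Q}(J_1\boldsymbol{\Omega}_2)$, so that
\[
\mathbb{Q}^T(J_2\boldsymbol{\omega}_2)'=\mathbb{Q}^T\bigl(\mathbb{Q}(J_1\boldsymbol{\Omega}_2)\bigr)'
=\mathbb{Q}^T\mathbb{Q}'\,(J_1\boldsymbol{\Omega}_2)+(J_1\boldsymbol{\Omega}_2)'
=\widetilde{\boldsymbol{\omega}}\times(J_1\boldsymbol{\Omega}_2)+(J_1\boldsymbol{\Omega}_2)',
\]
and then equates this with $\mathbb{Q}^T$ applied to the right-hand side of the original Euler equation, which becomes the surface integral over $\partial S_1(t)$ by \eqref{Geom} and the $SO(3)$ identity you mention. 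This two-line argument sidesteps the Reynolds computation entirely; your route would also work but is unnecessarily heavier.
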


\begin{proof}
Equations \eqref{transformedNS} and \eqref{transformedSlip} follow from the
standard calculations, we refer to Appendix \ref{Sec:LT} and the above
considerations. The interested reader can find complete details of these
calculations, for example, in the articles \cite{IW,GGH}. Let us prove %
\eqref{transformedNewt}. We calculate:
\begin{eqnarray*}
\mathbf{A}_{2}^{\prime } &=&({\mathbb{Q}}^{T}\mathbf{a}_{2})^{\prime }={%
\mathbb{Q}}^{T}\mathbf{a}_{2}^{\prime }+({\mathbb{Q}}^{T})^{\prime }\mathbf{a%
}_{2}=-\int_{\partial S_{1}(t)}\mathcal{T}(\mathbf{U}_{2},P_{2})\mathbf{n}%
_{1}\,d\mathbf{\gamma }-{\mathbb{Q}}^{T}{\mathbb{Q}}^{\prime }{\mathbb{Q}}%
^{T}\mathbf{a}_{2} \\
&=&-\int_{\partial S_{1}(t)}\mathcal{T}(\mathbf{U}_{2},P_{2})\mathbf{n}_{1}\,d%
\mathbf{\gamma }-\widetilde{\boldsymbol{\omega }}\times \mathbf{A}_{2}.
\end{eqnarray*}

Let us now deduce \eqref{transformedAng}. We have:
\begin{eqnarray*}
{\mathbb{Q}}^{T}(J_{2}\boldsymbol{\omega }_{2})^{\prime } &=&-{\mathbb{Q}}%
^{T}\int_{\partial S_{2}(t)}\left\{ (\mathbf{x}_{2}-\mathbf{q}_{2}(t))\times
{\mathcal{T}}(\mathbf{u}_{2},p_{2})\mathbf{n}_{2}\right\} \,d\mathbf{\gamma }(%
\mathbf{x}_{2}) \\
&=&-\int_{\partial S_{1}(t)}\left\{ (\mathbf{x}_{1}-\mathbf{q}_{1}(t))\times
{\mathcal{T}}(\mathbf{U}_{2},P_{2})\mathbf{n}_{1}\right\} \,d\mathbf{\gamma }(%
\mathbf{x}_{1})
\end{eqnarray*}%
and on the other hand:
\begin{eqnarray*}
{\mathbb{Q}}^{T}(J_{2}\boldsymbol{\omega }_{2})^{\prime } &=&{\mathbb{Q}}%
^{T}({\mathbb{Q}}J_{1}{\mathbb{Q}}^{T}\boldsymbol{\omega }_{2})^{\prime } \\
&=&{\mathbb{Q}}^{T}{\mathbb{Q}}^{\prime }J_{1}\boldsymbol{\Omega }_{2}+(J_{1}%
\boldsymbol{\Omega }_{2})^{\prime }=\widetilde{\boldsymbol{\omega }}\times
(J_{1}\boldsymbol{\Omega }_{2})+(J_{1}\boldsymbol{\Omega }_{2})^{\prime }.
\end{eqnarray*}%
Therefore combining these last two relations we derive \eqref{transformedAng}%
. Hence this lemma is proven.$\hfill \;\blacksquare $
\end{proof}

\bigskip

As a consequence of the previous Lemma \ref{Strong1} we can give the weak
formulation for $(\mathbf{U}_{2},P_{2},\mathbf{A}_{2},\boldsymbol{\Omega }%
_{2}).$

\begin{corollary}
\label{weak2} Let us denote by
\begin{equation*}
\mathbf{F}={
\left \{
\begin{array}{lr}
\mu(\mathcal{L}-\triangle )\mathbf{U}_{2}-\mathcal{M}\mathbf{U}_{2}-%
\tilde{\mathcal{N}}\mathbf{U}_{2}-(\mathcal{G}{-}\nabla )P_{2}\; &{\rm in}\; (0,T)\times\Omega_F^1(t),
\\
0& {\rm in}\; (0,T)\times S_1(t).
\end{array}
\right .}
\end{equation*}
Then the transformed solution $\ (\mathbf{U}_{2},P_{2},\mathbf{A}_{2},%
\boldsymbol{\Omega }_{2})$ satisfies the following equality:%
\begin{eqnarray}
&&\int_{0}^{T}dt\int_{\Omega \backslash \partial S_{1}(t)}\mathbf{U}%
_{2}\cdot {\partial_t}\boldsymbol{\psi }~d\mathbf{x}_{1}+\int_{0}^{T}dt\int_{%
\Omega _{F}^{1}(t)}\Big ((\mathbf{u}_{1}\otimes \mathbf{U}_{2}):%
{\nabla^{T}\boldsymbol{\psi }}-(\mathbf{U}_{2}-\mathbf{u}_{1})\cdot \nabla \mathbf{U}%
_{2}\cdot \boldsymbol{\psi }\Big )\ \,d\mathbf{x}_{1}  \notag \\
&&-2\mu _{f}\int_{0}^{T}dt\int_{\Omega \backslash \partial S_{1}(t)}\big (%
\mathbb{D}\mathbf{U}_{2}:\mathbb{D}\boldsymbol{\psi }+\mathbf{F}\cdot
\boldsymbol{\psi }\big )\,d\mathbf{x}_{1}=\int_{0}^{T}dt\left\{
\int_{\partial S_{1}(t)}\beta (\mathbf{U}_{s}^{2}-\mathbf{U}_{2})\cdot (%
\boldsymbol{\psi }_{s}-\boldsymbol{\psi }_{f})\,d\mathbf{\gamma }(\mathbf{x}%
_{1})\right\}  \notag \\
&&-\int_{\Omega }\mathbf{u}_{0}\boldsymbol{\psi }(0,\cdot )\,d\mathbf{x}%
_{1}+\int_{0}^{T}(\widetilde{\boldsymbol{\omega }}\times (J_{1}\boldsymbol{%
\Omega }_{2})\cdot \boldsymbol{\psi }_{\omega }+\widetilde{\boldsymbol{%
\omega }}\times \mathbf{A}_{2}\cdot \boldsymbol{\psi }_{h})\ dt,
\label{Strong}
\end{eqnarray}%
which holds for any test function $\boldsymbol{\psi }$ \ satisfying %
\eqref{test}. Let us note that this function $\boldsymbol{\psi }$ is a rigid
one on $S_{1}(t)$, that is
\begin{equation*}
\boldsymbol{\psi }(t,\mathbf{x})=\boldsymbol{\psi }_{h}(t)+\boldsymbol{\psi }%
_{\omega }\times (\mathbf{x}-\mathbf{q}_{1}(t))\qquad \text{for }\mathbf{x}%
\in S_{1}(t).
\end{equation*}
\end{corollary}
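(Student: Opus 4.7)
The plan is to derive identity~(\ref{Strong}) by testing the strong formulation of Lemma~\ref{Strong1} against $\boldsymbol{\psi}$ piecewise on $\Omega_{F}^{1}(t)$ and $S_{1}(t)$, handling the moving-domain time derivative via Lemma~\ref{Rey}, and gluing the fluid and rigid contributions through the boundary terms on $\partial S_{1}(t)$.

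First I would take the $L^{2}$-inner product of (\ref{transformedNS}) with $\boldsymbol{\psi}|_{\Omega_{F}^{1}(t)}$ and integrate over $(0,T)$. The time derivative and convection are split as
$$
\partial_{t}\mathbf{U}_{2}+(\mathbf{U}_{2}\cdot\nabla)\mathbf{U}_{2}=\bigl(\partial_{t}\mathbf{U}_{2}+(\mathbf{u}_{1}\cdot\nabla)\mathbf{U}_{2}\bigr)+((\mathbf{U}_{2}-\mathbf{u}_{1})\cdot\nabla)\mathbf{U}_{2};
$$
the first bracket is the material derivative carried by the divergence-free field $\mathbf{u}_{1}$, whose normal trace on $\partial S_{1}(t)$ coincides with the boundary motion, so Lemma~\ref{Rey} applied to $\mathbf{U}_{2}\cdot\boldsymbol{\psi}$ shifts those terms onto $\boldsymbol{\psi}$ and produces $-\mathbf{U}_{2}\cdot\partial_{t}\boldsymbol{\psi}$, $-(\mathbf{u}_{1}\otimes\mathbf{U}_{2}):\nabla^{T}\boldsymbol{\psi}$, and (using $\boldsymbol{\psi}(T,\cdot)=0$ and $\mathbf{U}_{2}(0,\cdot)=\mathbf{u}_{0}$) the initial-data contribution. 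The residual $((\mathbf{U}_{2}-\mathbf{u}_{1})\cdot\nabla)\mathbf{U}_{2}\cdot\boldsymbol{\psi}$ is kept as is. Integration by parts in space on $-\mu\triangle\mathbf{U}_{2}+\nabla P_{2}$, together with $\mathrm{div}\,\boldsymbol{\psi}=0$ and the standard symmetrization, yields $2\mu\,\mathbb{D}\mathbf{U}_{2}:\mathbb{D}\boldsymbol{\psi}$ in the bulk plus a boundary flux on $\partial S_{1}(t)$, which, once combined with the boundary parts hidden in the transformation operators $\mathcal{L}-\triangle$, $\mathcal{G}-\nabla$, $\mathcal{M}$, $\tilde{\mathcal{N}}$ (cf.~(\ref{Geom}) and the Appendix), collapses to $-\int_{\partial S_{1}(t)}\mathcal{T}(\mathbf{U}_{2},P_{2})\mathbf{n}_{1}\cdot\boldsymbol{\psi}_{f}\,d\gamma$, while the remaining bulk correction is kept as the source term $-2\mu\int\mathbf{F}\cdot\boldsymbol{\psi}$.

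Next I multiply (\ref{transformedNewt})--(\ref{transformedAng}) by $\boldsymbol{\psi}_{h}(t)$ and $\boldsymbol{\psi}_{\omega}(t)$ respectively, integrate by parts in time, and use the rigid representation $\boldsymbol{\psi}|_{S_{1}(t)}=\boldsymbol{\psi}_{h}+\boldsymbol{\psi}_{\omega}\times(\mathbf{x}-\mathbf{q}_{1})$ together with $\mathbf{U}_{2}|_{S_{1}(t)}=\mathbf{U}_{s}^{2}=\mathbf{A}_{2}+\boldsymbol{\Omega}_{2}\times(\mathbf{x}-\mathbf{q}_{1})$ so that the resulting time-derivative terms furnish precisely the $S_{1}(t)$-part of $\int_{\Omega\setminus\partial S_{1}(t)}\mathbf{U}_{2}\cdot\partial_{t}\boldsymbol{\psi}$ in~(\ref{Strong}); the gyroscopic terms $\widetilde{\boldsymbol{\omega}}\times\mathbf{A}_{2}$ and $\widetilde{\boldsymbol{\omega}}\times(J_{1}\boldsymbol{\Omega}_{2})$ migrate to the right-hand side. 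The fluid boundary integral $\int\mathcal{T}\mathbf{n}_{1}\cdot\boldsymbol{\psi}_{f}$ and the rigid one $\int\mathcal{T}\mathbf{n}_{1}\cdot\boldsymbol{\psi}_{s}$ combine into $\int_{\partial S_{1}(t)}\mathcal{T}\mathbf{n}_{1}\cdot(\boldsymbol{\psi}_{s}-\boldsymbol{\psi}_{f})\,d\gamma$; since $\boldsymbol{\psi}\in KB(S_{1}(t))$ enforces $(\boldsymbol{\psi}_{s}-\boldsymbol{\psi}_{f})\cdot\mathbf{n}_{1}=0$, only the tangential component survives, and the transformed slip condition~(\ref{transformedSlip}) converts it to $\int\beta(\mathbf{U}_{s}^{2}-\mathbf{U}_{2})\cdot(\boldsymbol{\psi}_{s}-\boldsymbol{\psi}_{f})\,d\gamma$, as required.

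The chief obstacle is not the Reynolds-and-convection algebra, which is standard, but the careful accounting of the transformation-induced stresses: one has to verify that the classical-stress boundary contribution obtained from integrating $-\mu\triangle\mathbf{U}_{2}+\nabla P_{2}$ by parts, augmented by the boundary parts of $(\mathcal{L}-\triangle)\mathbf{U}_{2}$, $(\mathcal{G}-\nabla)P_{2}$, $\mathcal{M}\mathbf{U}_{2}$ and $\tilde{\mathcal{N}}\mathbf{U}_{2}$, reconstructs exactly $\mathcal{T}(\mathbf{U}_{2},P_{2})\mathbf{n}_{1}$, which is what legitimises the use of~(\ref{transformedSlip}) on the glued surface term. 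This compatibility is a consequence of the Piola identity together with the rigidity of $\widetilde{\mathbf{X}}_{1}$ and $\widetilde{\mathbf{X}}_{2}$ in a neighborhood of $\partial S_{1}(t)$ (see~(\ref{RigidTransf})); once it is granted, the rest of the derivation is routine bookkeeping.
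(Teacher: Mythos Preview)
Your proposal is correct and follows essentially the same route as the paper, which likewise handles the inertial term by Reynolds transport (Lemma~\ref{Rey}) to push the material derivative carried by $\mathbf{u}_1$ onto $\boldsymbol{\psi}$, and then refers to the classical integration-by-parts machinery (citing \cite{CN}) for the remaining terms. One small simplification of what you call the ``chief obstacle'': there are in fact no boundary contributions hidden in $(\mathcal{L}-\Delta)$, $(\mathcal{G}-\nabla)$, $\mathcal{M}$, $\tilde{\mathcal{N}}$, because the rigidity of $\widetilde{\mathbf{X}}_1,\widetilde{\mathbf{X}}_2$ near $\partial S_1(t)$ (cf.~\eqref{RigidTransf}) forces $g_{ij}=\delta_{ij}$, $\Gamma^k_{ij}=0$ and hence $\mathcal{T}=\mathbb{T}$ there, so the boundary flux from integrating $-\mu\Delta\mathbf{U}_2+\nabla P_2$ by parts is already $\mathcal{T}(\mathbf{U}_2,P_2)\mathbf{n}_1$ and $\mathbf{F}$ stays purely as a bulk source.
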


\begin{proof}
Using the Reynolds transport theorem \eqref{eq6} we can write the inertial
term as:
\begin{eqnarray*}
&&\int_{0}^{T}dt\int_{\Omega _{F}^{1}(t)}(\partial _{t}\mathbf{U}_{2}\cdot
\boldsymbol{\psi }+\mathbf{U}_{2}\cdot \nabla \mathbf{U}_{2}\cdot
\boldsymbol{\psi })~\,d\mathbf{x}_{1} \\
&=&-\int_{0}^{T}dt\int_{\Omega _{F}^{1}(t)}\big (\mathbf{U}_{2}\cdot
\partial _{t}\boldsymbol{\psi }+\mathbf{u}_{1}\cdot \nabla (\mathbf{U}%
_{2}\cdot \boldsymbol{\psi })-\mathbf{U}_{2}\cdot \nabla \mathbf{U}_{2}\cdot
\boldsymbol{\psi }\big )\,d\mathbf{x}_{1}-\int_{\Omega }\mathbf{u}_{0}%
\boldsymbol{\psi }(0,\cdot )\,d\mathbf{x}_{1} \\
&=&-\int_{0}^{T}dt\int_{\Omega _{F}^{1}(t)}\big (\mathbf{U}_{2}\cdot
\partial _{t}\boldsymbol{\psi }+\mathbf{u}_{1}\otimes \mathbf{U}_{2}:{\nabla^T\boldsymbol{\psi }}+(\mathbf{u}_{1}-\mathbf{U}_{2})\cdot \nabla \mathbf{U}%
_{2}\cdot \boldsymbol{\psi }\big )\,d\mathbf{x}_{1}-\int_{\Omega }\mathbf{u}%
_{0}\boldsymbol{\psi }(0,\cdot )\,d\mathbf{x}_{1}.
\end{eqnarray*}%
The rest of the proof follows directly from Lemma \ref{Strong1} in a
classical way. The details can be found in Appendix A.1. of the article \cite%
{CN}.$\hfill \;\blacksquare $
\end{proof}

\bigskip

\bigskip

Before proceeding with the proof we need to prove the following two Lemmas
that give us estimate {for the additional terms arriving from transformation} in \eqref{transformedNS}-%
\eqref{transformedAng}.

{
\begin{lemma}
\label{EstimateRot} For the vector $\widetilde{\boldsymbol{\omega }}$
defined by \eqref{w} the equality holds:%
\begin{equation*}
\widetilde{\boldsymbol{\omega }}(t)=
\boldsymbol{\Omega }_{2}(t)-\boldsymbol{\omega }_{1}(t),\qquad t\in \lbrack 0,T_{0}].
\end{equation*}%
\end{lemma}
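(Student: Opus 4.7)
My plan is to compute $\mathbb{Q}^T\mathbb{Q}'$ directly from $\mathbb{Q}=\mathbb{Q}_2\mathbb{Q}_1^T$, and then translate the resulting identity between skew-symmetric matrices into an identity between the associated axial vectors using the definition $\boldsymbol{\Omega}_2=\mathbb{Q}^T\boldsymbol{\omega}_2$.

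First, differentiating $\mathbb{Q}=\mathbb{Q}_2\mathbb{Q}_1^T$ and using $\mathbb{Q}_i'=\mathbb{P}_i\mathbb{Q}_i$ together with the orthogonality relation $\mathbb{Q}_1(\mathbb{Q}_1^T)'=-\mathbb{Q}_1'\mathbb{Q}_1^T=-\mathbb{P}_1$, I would obtain
\begin{equation*}
\mathbb{Q}^T\mathbb{Q}'=\mathbb{Q}_1\mathbb{Q}_2^T(\mathbb{P}_2\mathbb{Q}_2\mathbb{Q}_1^T+\mathbb{Q}_2(\mathbb{Q}_1^T)')
=\mathbb{Q}^T\mathbb{P}_2\mathbb{Q}-\mathbb{P}_1.
\end{equation*}
Hence, by the defining relation \eqref{w} for $\widetilde{\boldsymbol{\omega}}$, one has $\widetilde{\boldsymbol{\omega}}\times\mathbf{x}=\mathbb{Q}^T\mathbb{P}_2\mathbb{Q}\,\mathbf{x}-\mathbb{P}_1\mathbf{x}$ for every $\mathbf{x}\in\mathbb{R}^3$.

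The second step is the vector-identity conversion. For any proper rotation $R\in SO(3)$ and any $\boldsymbol{\alpha},\mathbf{x}\in\mathbb{R}^3$ one has $R(\boldsymbol{\alpha}\times\mathbf{x})=(R\boldsymbol{\alpha})\times(R\mathbf{x})$, so with $R=\mathbb{Q}^T$ and $\boldsymbol{\alpha}=\boldsymbol{\omega}_2$ I obtain
\begin{equation*}
\mathbb{Q}^T\mathbb{P}_2\mathbb{Q}\,\mathbf{x}=\mathbb{Q}^T(\boldsymbol{\omega}_2\times\mathbb{Q}\mathbf{x})=(\mathbb{Q}^T\boldsymbol{\omega}_2)\times\mathbf{x}=\boldsymbol{\Omega}_2\times\mathbf{x}.
\end{equation*}
Combining this with the previous identity gives $\widetilde{\boldsymbol{\omega}}\times\mathbf{x}=(\boldsymbol{\Omega}_2-\boldsymbol{\omega}_1)\times\mathbf{x}$ for every $\mathbf{x}\in\mathbb{R}^3$, and the uniqueness of the axial vector of a skew-symmetric matrix yields $\widetilde{\boldsymbol{\omega}}=\boldsymbol{\Omega}_2-\boldsymbol{\omega}_1$.

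There is no real obstacle here: the whole argument is two linear-algebra computations. The only point requiring a little care is the conjugation identity $R\mathbb{P}R^T=\widehat{R\boldsymbol{\alpha}}$ relating the skew-symmetric operator $\mathbb{P}=\widehat{\boldsymbol{\alpha}}$ (with $\widehat{\boldsymbol{\alpha}}\mathbf{x}=\boldsymbol{\alpha}\times\mathbf{x}$) to its conjugate under a proper rotation; this is where $\mathbb{Q}\in SO(N)$ (as opposed to merely orthogonal) is used, and it is the reason the signs come out so cleanly.
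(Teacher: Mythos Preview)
Your proof is correct and follows essentially the same route as the paper: both arguments differentiate $\mathbb{Q}=\mathbb{Q}_2\mathbb{Q}_1^T$ and use the rotation-equivariance of the cross product to identify $\mathbb{Q}^T\mathbb{P}_2\mathbb{Q}$ with the skew-symmetric matrix associated to $\boldsymbol{\Omega}_2=\mathbb{Q}^T\boldsymbol{\omega}_2$. The only cosmetic difference is that the paper first establishes $\mathbb{P}_{\boldsymbol{\Omega}_2}=\mathbb{Q}^T\mathbb{P}_2\mathbb{Q}$ and then computes $\mathbb{Q}'$, whereas you compute $\mathbb{Q}^T\mathbb{Q}'$ directly and apply the conjugation identity at the end.
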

\begin{proof}
Let us denote by $\P_{\Omega_2}(t)$ matrix such that $\P_{\Omega_2}(t)\bx=\Omega_2(t)\times\bx$, $\bx\in\R^3$. Since $\Q(t)\in SO(3)$, $t\in (0,T)$, we have:
$$
\Omega_2\times\bx
=(\Q^T\bomega_2)\times\bx
=\Q^T(\bomega_2\times\Q\bx)
=\Q^T\P_2\Q\bx.
$$
Therefore,
\begin{equation}\label{POmega}
\P_{\Omega_2}(t)=\Q^T(t)\P_2(t)\Q(t)=\Q(t)\Q_2^{\prime}(t)\Q_1(t)^T.
\end{equation}
Here we used definitions of $\Q$ and $\P_2$, i.e. $\Q=\Q_2\Q_1^T$, $\P_2=\Q_2^{\prime}\Q_2^T$.
Now, we calculate $\Q^{\prime}$:
\begin{eqnarray*}
{\mathbb{Q}}^{\prime } &=&{\mathbb{Q}}_{2}^{\prime }{\mathbb{Q}}_{1}^{T}+{%
\mathbb{Q}}_{2}({\mathbb{Q}}_{1}^{T})^{\prime }={\mathbb{Q}}{\mathbb{Q}}^{T}{%
\mathbb{Q}}_{2}^{\prime }{\mathbb{Q}}_{1}^{T}-{\mathbb{Q}}_{2}{\mathbb{Q}}%
_{1}^{T}{\mathbb{Q}}_{1}^{\prime }{\mathbb{Q}}_{1}^{T} \\
&=&{\mathbb{Q}}({\mathbb{Q}}^{T}{\mathbb{Q}}_{2}^{\prime }-{\mathbb{Q}}%
_{1}^{\prime }){\mathbb{Q}}_{1}^{T}
=\Q(\P_{\Omega_2}-\P_1).
\end{eqnarray*}
Hence,
\begin{align*}
\P_{\tilde{\bomega}}=\Q^T\Q'=(\P_{\Omega_2}-\P_1),
\end{align*}
which finishes the proof of the Lemma. $\hfill \;\blacksquare $
\end{proof}
\begin{lemma}
\label{EstimateRHS} The following estimate holds:
\begin{eqnarray*}
\Vert {\mu}(\mathcal{L}-\triangle )\mathbf{U}_{2} &-&\mathcal{M}\mathbf{U}_{2}-%
\tilde{\mathcal{N}}\mathbf{U}_{2}-(\mathcal{G}{-}\nabla )P_{2}\Vert
_{L^{2}(0,T_{0};L^{2}(\Omega _{F}^{1}(t)))} \\
&\leq & C\left( ||\mathbf{a}_{1}-\mathbf{A}_{2}||_{L^{2}(0,T_{0})}+||%
\boldsymbol{\omega }_{1}-\boldsymbol{\Omega }_{2}||_{L^{2}(0,T_{0})} \right),
\end{eqnarray*}%
where {$C$ depends only on $\Vert \mathbf{U}_{2}\Vert
_{L^{2}(0,T_{0};H^{2}(\Omega _{F}^{1}(t)))}$, $\Vert P_{2}\Vert
_{L^{2}(0,T_{0};H^{1}(\Omega _{F}^{1}(t)))}$ and $\Vert \mathbf{U}_{2}\Vert
_{L^{\infty }(0,T_{0};H^{1}(\Omega _{F}(t)))}$.}\
\end{lemma}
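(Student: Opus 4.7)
The plan is to exploit the structural fact that all four operators $\mathcal{L}-\triangle$, $\mathcal{M}$, $\tilde{\mathcal{N}}$ and $\mathcal{G}-\nabla$ vanish identically when the two rigid motions coincide; the output should therefore be controlled by suitable norms of the differences $\mathbf{a}_1-\mathbf{A}_2$, $\boldsymbol{\omega}_1-\boldsymbol{\Omega}_2$, and of integrated quantities $\mathbf{q}_1-\mathbf{q}_2$ and $\mathbb{Q}-\mathbb{I}$.

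The first step is to read off from the appendix the explicit form of the coefficients of $\mathcal{L}$, $\mathcal{M}$, $\tilde{\mathcal{N}}$ and $\mathcal{G}$. Each coefficient is a polynomial expression in the entries of $\nabla\widetilde{\mathbf{X}}_1$, $\nabla\widetilde{\mathbf{X}}_2$, $\partial_t\widetilde{\mathbf{X}}_1$ and the Jacobian determinants, minus the corresponding entries obtained by setting $\widetilde{\mathbf{X}}_1=\mathrm{id}$. In the neighborhood of $\partial S_1(t)$ where these transformations are rigid, formula \eqref{RigidTransf} gives $\nabla\widetilde{\mathbf{X}}_1=\mathbb{Q}$ and $\partial_t\widetilde{\mathbf{X}}_1=(\mathbf{a}_1-\mathbb{Q}\mathbf{A}_2)+(\text{rotation terms involving }\mathbb{Q}^{\prime})$; globally the transformation is an interpolation, constructed with a fixed cutoff so that its coefficients depend smoothly on the finitely many parameters $(\mathbf{q}_1-\mathbf{q}_2,\mathbb{Q}-\mathbb{I})$ and their time derivatives. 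In particular, if $\mathbb{Q}=\mathbb{I}$ and $\mathbf{q}_1=\mathbf{q}_2$, then $\widetilde{\mathbf{X}}_1=\mathrm{id}$ and all four operators reduce to their classical Euclidean counterparts; hence the right-hand side of \eqref{transformedNS} vanishes.

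The second step is the quantitative comparison. Using Lemma \ref{EstimateRot} and its proof, $\mathbb{Q}^{\prime}=\mathbb{Q}(\mathbb{P}_{\Omega_2}-\mathbb{P}_1)$, so
\begin{equation*}
\|\mathbb{Q}(t)-\mathbb{I}\|\leq \int_0^t |\boldsymbol{\omega}_1-\boldsymbol{\Omega}_2|\,ds,\qquad
|\mathbf{q}_1(t)-\mathbf{q}_2(t)|\leq \int_0^t |\mathbf{a}_1-\mathbf{a}_2|\,ds,
\end{equation*}
and $\mathbf{a}_1-\mathbf{a}_2=(\mathbf{a}_1-\mathbf{A}_2)+(\mathbb{I}-\mathbb{Q})\mathbf{A}_2$. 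Combining these with a Taylor expansion of the smooth coefficient maps about the identity configuration, each coefficient in $\mathcal{L}-\triangle$, $\mathcal{M}$, $\tilde{\mathcal{N}}$ and $\mathcal{G}-\nabla$ is pointwise bounded by $C(|\mathbb{Q}-\mathbb{I}|+|\mathbf{q}_1-\mathbf{q}_2|+|\mathbf{a}_1-\mathbf{A}_2|+|\boldsymbol{\omega}_1-\boldsymbol{\Omega}_2|)$, uniformly on $[0,T_0]$ thanks to the $H^1$-regularity of $\mathbf{a}_2,\boldsymbol{\omega}_2$.

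The third step is to take the $L^2(L^2)$ norm. Schematically, a typical term in the right-hand side has the form $\alpha(t,\mathbf{x})\,D^2\mathbf{U}_2$ or $\beta(t,\mathbf{x})\,\nabla P_2$, where $\alpha,\beta$ are the coefficients just estimated and so $\|\alpha\|_{L^\infty}+\|\beta\|_{L^\infty}\leq C(\|\mathbf{a}_1-\mathbf{A}_2\|_{L^2}+\|\boldsymbol{\omega}_1-\boldsymbol{\Omega}_2\|_{L^2})$ after applying Cauchy-Schwarz to the integrated forms. Pulling this essential-supremum factor out and using $\mathbf{U}_2\in L^2(0,T_0;H^2(\Omega_F^1(t)))$, $P_2\in L^2(0,T_0;H^1(\Omega_F^1(t)))$, $\mathbf{U}_2\in L^\infty(0,T_0;H^1)$ from Theorem \ref{theorem-strong}, yields the asserted estimate. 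The main obstacle is the bookkeeping in the second step: one must check that every coefficient of $\mathcal{L}-\triangle$, $\mathcal{M}$, $\tilde{\mathcal{N}}$ and $\mathcal{G}-\nabla$ has the claimed vanishing at the identity configuration, including the pressure correction $\mathcal{G}-\nabla$ whose dependence on $\mathbb{Q}$ is more subtle, and to verify that the global cutoff extension used to define $\widetilde{\mathbf{X}}_1$ outside the rigid neighborhood does not introduce coefficients with a nontrivial leading term.
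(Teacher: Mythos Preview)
Your proposal is correct and follows essentially the same route as the paper: you estimate $\widetilde{\mathbf{X}}_1-\mathrm{id}$ (equivalently $\widetilde{\mathbf{X}}_2-\mathrm{id}$) and $\partial_t\widetilde{\mathbf{X}}_2$ in terms of $\mathbf{a}_1-\mathbf{A}_2$ and $\boldsymbol{\omega}_1-\boldsymbol{\Omega}_2$ via Lemma~\ref{EstimateRot}, deduce $L^\infty$ bounds on the coefficients $g^{ij}-\delta_{ij}$, $\Gamma^k_{ij}$, etc., and conclude using the $L^2H^2\cap L^\infty H^1$ regularity of $\mathbf{U}_2$ and the $L^2H^1$ regularity of $P_2$. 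The only organisational difference is that the paper computes $\partial_t\widetilde{\mathbf{X}}_2$ directly and integrates it to obtain $\widetilde{\mathbf{X}}_2-\mathrm{id}$ in one step (equation~\eqref{DtXEastimate}), thereby avoiding your detour through $\mathbf{q}_1-\mathbf{q}_2$ and the identity $\mathbf{a}_1-\mathbf{a}_2=(\mathbf{a}_1-\mathbf{A}_2)+(\mathbb{I}-\mathbb{Q})\mathbf{A}_2$; and it appeals to the explicit constructions in \cite{GGH,T,GS15} for the $W^{2,\infty}$ extension estimate rather than invoking a generic Taylor expansion of the coefficient maps.
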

{
\begin{proof}
First we estimate transformations $\widetilde{\mathbf{X}}_{2}$ and $%
\widetilde{\mathbf{X}}_{1}$. Since these transformations are rigid in the
neighborhood of the rigid body we have:
\begin{eqnarray*}
\widetilde{\mathbf{X}}_{2}(t,\mathbf{x}_{1}) &=&\mathbf{q}_{2}(t)+{\mathbb{Q}}(t)(%
\mathbf{x}_{1}-\mathbf{q}_{1}(t)).
\end{eqnarray*}%
We calculate:
\begin{align*}
\partial_t \widetilde{\mathbf{X}}_{2}(t,\mathbf{x}_{1})
&=\ba_2(t)+\Q^\prime(t)(\bx_1-\bq_1(t))-\Q(t)\ba_1(t)
\\
&=\Q(t)\big (\bA_2(t)-\ba_1(t)+\Q^T(t)\Q^\prime(t)(\bx_1-\bq_1(t))\big )
\\
&=\Q(t)\big ((\bA_2(t)-\ba_1(t))+(\bOmega_2(t)-{\bomega_1(t)})\times(\bx_1-\bq_1(t)\big ).
\end{align*}
In the last quality we used Lemma~\ref{EstimateRot}. By integration of the last equality we obtain:
\begin{align}\label{DtXEastimate}
(\widetilde{\mathbf{X}}_{2}-{\rm id})(t,\mathbf{x}_{1})
=\int_0^t\Q(r)\big ((\bA_2(r)-\ba_1(r))+(\bOmega_2(r)-{ \bomega_1(r)})\times(\bx_1-\bq_1(r)\big )dr.
\end{align}
Since $\Q$ and $\bq_1$ are uniformly bounded we get the following
estimates on $\partial S_{1}(t)$ for $t\in \lbrack 0,T_{0}]:$
\begin{eqnarray*}
|\widetilde{\mathbf{X}}_{2}(t)-\mathrm{id}|
&\leq & C(\Vert \mathbf{a}_{1}-\mathbf{A}_{2}\Vert _{L^{2}(0,T_{0})}+\Vert
\boldsymbol{\omega }_{1}-\boldsymbol{\Omega }_{2}\Vert _{L^{2}(0,T_{0})}),
\\
|\partial _{t}\widetilde{\mathbf{X}}_{2}(t)| &\leq& C(|\mathbf{a}_{1}(t)-%
\mathbf{A}_{2}(t)|+|\boldsymbol{\omega }_{1}(t)-\boldsymbol{\Omega }%
_{2}(t)|).
\end{eqnarray*}
Using the previous estimates and a standard construction of change of
variables connected to the rigid motion $\widetilde{\mathbf{X}}_{2}$ (see
the articles \cite{GGH,T} or for slightly different point of view we refer
to Proposition 1 and Corollary 1 of \cite{GS15}), one gets the following
estimates:
\begin{equation}
\left.
\begin{array}{l}
\Vert \widetilde{\mathbf{X}}_{2}(t,.)-\mathrm{id}\Vert _{W^{2,\infty
}(\Omega _{F}^{1}(t))}\leq C(\Vert \mathbf{a}_{1}-\mathbf{A}_{2}\Vert
_{L^{2}(0,T_{0})}+\Vert \boldsymbol{\omega }_{1}-\boldsymbol{\Omega }%
_{2}\Vert _{L^{2}(0,T_{0})}), \\
\quad \Vert \partial _{t}\widetilde{\mathbf{X}}_{2}(t,.)\Vert _{W^{1,\infty
}(\Omega _{F}^{1}(t))}\leq C(|\mathbf{a}_{1}(t)-\mathbf{A}_{2}(t)|+|%
\boldsymbol{\omega }_{1}(t)-\boldsymbol{\Omega }_{2}(t)|),%
\end{array}%
\right\} \qquad t\in \lbrack 0,T_{0}].  \label{TrEst1}
\end{equation}%
Analogous estimates can be derived for $\widetilde{\mathbf{X}}_{1}$.

To finish the proof we use the formulas for the transformed differential
operators \eqref{OpL}-\eqref{OpP}. Estimates \eqref{TrEst1} imply:
\begin{eqnarray*}
\Vert g_{ij}(t)-\delta _{ij}\Vert _{W^{1,\infty }(\Omega _{F}(t))}&+&\Vert
g^{ij}(t)-\delta _{ij}\Vert _{W^{1,\infty }(\Omega _{F}(t))}+\Vert \Gamma
_{ij}^{k}(t)\Vert _{L^{\infty }(\Omega _{F}(t))} \\
&\leq &C(|\mathbf{a}_{1}-\mathbf{A}_{2}|_{L^{2}(0,T_{0})}+|\boldsymbol{%
\omega }_{1}-\boldsymbol{\Omega }_{2}|_{L^{2}(0,T_{0})}),\qquad t\in \lbrack
0,T_{0}].
\end{eqnarray*}%
The proof of this lemma follows from the fact that {\ }%
\begin{equation*}
{\mathbf{U}_{2}\in L^{2}(0,T_{0};H^{2}(\Omega _{F}^{1}(t)))\cap L^{\infty
}(0,T_{0};H^{1}(\Omega _{F}(t))),\qquad P_{2}\in L^{2}(0,T_{0};H^{1}(\Omega
_{F}^{1}(t))).}
\end{equation*}
$\hfill \;\blacksquare $
\end{proof}
}
}

\bigskip

Let us give a principal lemma of our article from which Theorem \ref%
{WeakStrong} follows.

\begin{lemma}
\label{weak_strong} We have:
\begin{equation*}
(\mathbf{u}_{1},p_{1},\mathbf{a}_{1},\boldsymbol{\omega }_{1})=(\mathbf{U}%
_{2},P_{2},\mathbf{A}_{2},\boldsymbol{\Omega }_{2})\qquad \text{and}\qquad (%
\mathbf{u}_{1},\mathbf{B}_{1})=(\mathbf{u}_{2},\mathbf{B}_{2}).
\end{equation*}
\end{lemma}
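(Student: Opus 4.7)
The plan is to use a relative energy / Serrin-type weak-strong comparison, with the transformed strong solution playing the role of the test function against the weak solution.

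First, I would set up the energy inequality for the difference $\mathbf{u}_1 - \mathbf{U}_2$. Since $\mathbf{U}_2$ is rigid on $S_1(t)$ (by construction of the transformation), it is an admissible test function in the weak formulation \eqref{Weak} satisfied by $\mathbf{u}_1$ on time intervals $(0,t)$ (after a standard time-cutoff argument). Conversely, $\mathbf{u}_1$ is not smooth enough to be a test function in \eqref{Strong}, but using the regularity of the strong solution given by Theorem \ref{theorem-strong} one can pair $\mathbf{U}_2$ against $\mathbf{u}_1$ in the equations \eqref{transformedNS}--\eqref{transformedAng} directly and integrate by parts. Adding the resulting identities to the energy inequality of Theorem \ref{theorem} (and to the analogous energy balance for $\mathbf{U}_2$, which holds as an equality by the strong regularity) yields, after cancellations of the cross terms,
\begin{align*}
\frac{1}{2}\int_{\Omega}|\mathbf{u}_1-\mathbf{U}_2|^2(t)\,d\mathbf{x}
&+ 2\mu\int_0^t\!\!\int_{\Omega_F^1(r)} |\mathbb{D}(\mathbf{u}_1-\mathbf{U}_2)|^2\,d\mathbf{x}\,dr \\
&+ \beta\int_0^t\!\!\int_{\partial S_1(r)}|(\mathbf{u}_1-\mathbf{U}_2)-(\mathbf{u}_s^1-\mathbf{U}_s^2)|^2\,d\boldsymbol{\gamma}\,dr
\;\leq\; R(t),
\end{align*}
where $R(t)$ collects the remainder coming from the transformation operators $\mathcal{L},\mathcal{M},\tilde{\mathcal{N}},\mathcal{G}$ together with the rotational correction terms $\widetilde{\boldsymbol{\omega}}\times\mathbf{A}_2$ and $\widetilde{\boldsymbol{\omega}}\times(J_1\boldsymbol{\Omega}_2)$ from \eqref{transformedNewt}--\eqref{transformedAng}, and the convective remainder $(\mathbf{u}_1-\mathbf{U}_2)\cdot\nabla\mathbf{U}_2\cdot(\mathbf{u}_1-\mathbf{U}_2)$.

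Second, I would estimate $R(t)$. The convective term is handled in the usual way by H\"older and Young inequalities using $\nabla\mathbf{U}_2\in L^2(0,T_0;L^\infty)$ (coming from $H^2$ in space), absorbing part of it into the viscous dissipation via Korn's inequality on $\Omega_F^1(t)$. The contribution coming from the transformation-operator remainder $\mathbf{F}$ is bounded by Lemma \ref{EstimateRHS} in terms of $\|\mathbf{a}_1-\mathbf{A}_2\|_{L^2}^2+\|\boldsymbol{\omega}_1-\boldsymbol{\Omega}_2\|_{L^2}^2$; the rotational correction $\widetilde{\boldsymbol{\omega}}\times\mathbf{A}_2$ and $\widetilde{\boldsymbol{\omega}}\times(J_1\boldsymbol{\Omega}_2)$, by Lemma \ref{EstimateRot} and the identity $\widetilde{\boldsymbol{\omega}}=\boldsymbol{\Omega}_2-\boldsymbol{\omega}_1$, is likewise bounded by the same quantity. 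The crucial observation is that these rigid-body differences can in turn be controlled by the solid-part $L^2$ norm of $\mathbf{u}_1-\mathbf{U}_2$: on $S_1(t)$ we have $\mathbf{u}_1-\mathbf{U}_2=(\mathbf{a}_1-\mathbf{A}_2)+(\boldsymbol{\omega}_1-\boldsymbol{\Omega}_2)\times(\mathbf{x}-\mathbf{q}_1(t))$, so
\[
|\mathbf{a}_1-\mathbf{A}_2|^2+|\boldsymbol{\omega}_1-\boldsymbol{\Omega}_2|^2\;\leq\;C\int_{S_1(t)}|\mathbf{u}_1-\mathbf{U}_2|^2\,d\mathbf{x}.
\]
Combining everything, one obtains an inequality of Gr\"onwall type for $\Phi(t):=\frac12\|\mathbf{u}_1-\mathbf{U}_2\|_{L^2(\Omega)}^2$, with an $L^1_t$ coefficient, together with matching initial data $\Phi(0)=0$.

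Third, Gr\"onwall's lemma then forces $\Phi\equiv 0$ on $[0,T_0)$, hence $\mathbf{u}_1=\mathbf{U}_2$ a.e., and from the estimate above also $\mathbf{a}_1=\mathbf{A}_2$, $\boldsymbol{\omega}_1=\boldsymbol{\Omega}_2$. Inverting the transformation $\widetilde{\mathbf{X}}_1$ and invoking \eqref{DtXEastimate} gives $\widetilde{\mathbf{X}}_1={\rm id}$ and so $\mathbf{B}_1=\mathbf{B}_2$, $\mathbf{u}_1=\mathbf{u}_2$, completing the identification.

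The main obstacle is the careful bookkeeping in Step~1: the time-differentiation of $\mathbf{U}_2$ along the moving domain $\Omega_F^1(t)$ (handled via the Reynolds transport identity \eqref{eq6}) and the correct matching of the Navier boundary contributions on $\partial S_1(t)$ with the corresponding tangential jump terms, so that after subtraction the \emph{cross} friction terms assemble into the clean dissipative term $\beta|(\mathbf{u}_1-\mathbf{U}_2)-(\mathbf{u}_s^1-\mathbf{U}_s^2)|^2$ on the left-hand side; any leftover sign-indefinite boundary contribution would destroy the Gr\"onwall scheme. The other nontrivial point is the convective term, which must be split and partially absorbed by viscous dissipation using the $H^2$-regularity of the strong solution — this is the fluid-rigid analogue of the Serrin-class estimate.
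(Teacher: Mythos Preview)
Your proposal is correct and follows essentially the same relative-energy/Serrin strategy as the paper: take $\mathbf{U}_2$ as test function in \eqref{Weak}, pair $\mathbf{u}_1$ with the strong equations \eqref{transformedNS}--\eqref{transformedAng}, combine with the two energy (in)equalities, control the transformation remainders by Lemmas~\ref{EstimateRot}--\ref{EstimateRHS} and the rigid-velocity differences by the solid-part $L^2$-norm, and close by Gr\"onwall.

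One technical correction is needed in your treatment of the convective term: in dimension $N=3$ the embedding $H^2\hookrightarrow W^{1,\infty}$ fails, so $\nabla\mathbf{U}_2\in L^2(0,T_0;L^\infty)$ is not available. The paper handles this differently: it first integrates by parts,
\[
\int_{\Omega_F^1}\mathbf{u}\cdot\nabla\mathbf{U}_2\cdot\mathbf{u}\,d\mathbf{x}
=-\int_{\Omega_F^1}\mathbf{u}\cdot\nabla\mathbf{u}\cdot\mathbf{U}_2\,d\mathbf{x}
+\int_{\partial\Omega_F^1}(\mathbf{u}\cdot\mathbf{n})(\mathbf{u}\cdot\mathbf{U}_2)\,d\boldsymbol{\gamma},
\]
and then estimates the volume integral via the interpolation $\|\mathbf{u}\|_{L^4}\leq C(\|\mathbf{u}\|_{L^2}^{1/4}\|\nabla\mathbf{u}\|_{L^2}^{3/4}+\|\mathbf{u}\|_{L^2})$, producing a Gr\"onwall coefficient of order $\|\mathbf{U}_2\|_{L^4}^8+\|\mathbf{U}_2\|_{L^4}^2$; the boundary integral is controlled by $(|\mathbf{a}|+|\boldsymbol{\omega}|)\|\mathbb{D}\mathbf{u}\|_{L^2}\|\mathbb{D}\mathbf{U}_2\|_{L^2}$ via the trace and Korn inequalities. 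Your direct estimate can be repaired along the same lines (or by using $\nabla\mathbf{U}_2\in L^2_tL^6_x$), but as written it does not close.
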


\begin{proof}
\noindent { We begin by presenting formal estimates which are valid with some additional integrability (e.g. $\bu_1\in L^8_tL^4_x$, \cite{Temam}). After, we show that the result is valid for any weak solution.}
First we subtract equality \eqref{Strong} for $(\mathbf{U}%
_{2},P_{2},\mathbf{A}_{2},\boldsymbol{\Omega }_{2})$ from equality  \eqref{Weak} for $(\mathbf{u}_{1},p_{1},\mathbf{a}_{1},\boldsymbol{\omega }%
_{2}).$ In the obtained identity for the difference:
\begin{equation*}
(\mathbf{u},p,\mathbf{a},\boldsymbol{\omega })=(\mathbf{u}_{1}-\mathbf{U}%
_{2},p_{1}-P_{2},\mathbf{a}_{1}-\mathbf{A}_{2},\boldsymbol{\omega }_{1}-%
\boldsymbol{\Omega }_{2})
\end{equation*}%
we can take the test function $\boldsymbol{\psi }(r)=\mathbf{u}%
(1-sgn_{+}^{\varepsilon }(r-t))$ for any fixed $t\in (0,T)$ and pass on $%
\varepsilon \rightarrow 0,$\ that gives the identity:%
{
\begin{eqnarray}
&& \int_{0}^{t}dr \, \int_{\Omega \setminus \partial S_{1}(r)}\frac{1}{2}%
\partial _{{r}}|\mathbf{u}({r})|^{2}\,d\mathbf{x}+\int_{0}^{t}dr\int_{\Omega^1_{F}(r)}\Big (\mathbf{u}_{1}\otimes \mathbf{u}:\nabla^T%
\mathbf{u}-\mathbf{u}%
\cdot \nabla \mathbf{U}_{2}\cdot \mathbf{u}\,\Big )d\mathbf{x}  \notag \\
&&-\int_{\Omega \setminus \partial S_{1}(t)}|\mathbf{u}(t)|^{2}\,d\mathbf{x}%
-2\mu
_{f}\int_{0}^{t}dr\int_{\Omega \backslash \partial S_{1}(r)}\big (|\mathbb{D}%
\mathbf{u}|^{2}-\mathbf{F}\cdot \mathbf{u}\big )\,d\mathbf{x}  \notag \\
&&=\int_{0}^{t}dr\int_{\partial S(r)}\beta |\mathbf{u}_{s}-\mathbf{u}|^{2}d%
\mathbf{\gamma }-\int_{0}^{t}(\widetilde{\boldsymbol{\omega }}\times (J_{1}%
\boldsymbol{\Omega }_{2})\cdot \boldsymbol{\omega }+\widetilde{\boldsymbol{%
\omega }}\times \mathbf{A}_{2}\cdot \mathbf{a})\ dr.  \label{WeakDif}
\end{eqnarray}%
}
The main difficulty in the study of this relation is to estimate the
difference of the convective terms.

Let us combine the convective terms with the fluid acceleration term, then
we have:
{
\begin{eqnarray*}
\int_{0}^{t}dr\int_{\Omega _{F}^{1}(r)}\Big (\frac{1}{2}\partial _{{r}}|%
\mathbf{u}({r})|^{2} &+&\mathbf{u}_{1}\otimes \mathbf{u}:\nabla \mathbf{u}-\mathbf{u}%
\cdot \nabla \mathbf{U}_{2}\cdot \mathbf{u}\Big )\,d\mathbf{x}-\int_{\Omega
_{F}^{1}(t)}|\mathbf{u}(t)|^{2}\,\,d\mathbf{x} \\
&=&-\frac{1}{2}\int_{\Omega _{F}^{1}(t)}|\mathbf{u}(t)|^{2}\,\,d\mathbf{x}%
-\int_{0}^{t}dr%
\int_{\Omega _{F}^{1}(r)}\mathbf{u}\cdot \nabla \mathbf{U}_{2}\cdot \mathbf{u%
}\ \,d\mathbf{x}.
\end{eqnarray*}%
}
By integration by parts the last term in the right hand side of this
identity is written as:
\begin{equation}
\int_{0}^{t}dr\int_{\Omega _{F}^{1}( {r} )}\mathbf{u}\cdot \nabla \mathbf{U}%
_{2}\cdot \mathbf{u}\ d\mathbf{x}=
-\int_{0}^{t}dr\int_{\Omega _{F}^{1}({r})}%
\mathbf{u}\cdot \nabla \mathbf{u}\cdot \mathbf{U}_{2}\ d\mathbf{x}%
+\int_{0}^{t}dr\int_{\partial {\Omega _{F}^{1}({r})}}(\mathbf{u}\cdot \mathbf{n%
})(\mathbf{u}\cdot \mathbf{U}_{2})\ d\mathbf{\gamma }.  \label{int}
\end{equation}%
The first term in the right-hand side \ of \eqref{int} can be estimated in
the standard way (see e.g. Temam \cite{Temam}) by using the interpolation:
\begin{eqnarray*}
|\int_{0}^{t}dr\int_{\Omega _{F}^{1}({r})}\mathbf{u}\cdot \nabla \mathbf{u}%
\cdot \mathbf{U}_{2}\ d\mathbf{x}| &\leq &\int_{0}^{t}\Vert \mathbf{u}\Vert
_{L^{4}}\Vert \nabla \mathbf{u}\Vert _{L^{2}}\Vert \mathbf{U}_{2}\Vert
_{L^{4}}\ dr \\
&\leq &C\int_{0}^{t}(\Vert \mathbf{u}\Vert _{L^{2}}^{1/4}\Vert \nabla
\mathbf{u}\Vert _{L^{2}}^{3/4}+\Vert \mathbf{u}\Vert _{L^{2}})\Vert \nabla
\mathbf{u}\Vert _{L^{2}}\Vert \mathbf{U}_{2}\Vert _{L^{4}}\ dr \\
&\leq &C\varepsilon \int_{0}^{t}\Vert \nabla \mathbf{u}\Vert _{L^{2}}^{2}\
dr+\frac{C}{\varepsilon }\int_{0}^{t}\Vert \mathbf{u}\Vert _{L^{2}}^{2}\big (%
\Vert \mathbf{U}_{2}\Vert _{L^{4}}^{2}+\Vert \mathbf{U}_{2}\Vert _{L^{4}}^{8}%
\big )\ dr.
\end{eqnarray*}
{Here we used the notation $\Vert \cdot \Vert _{L^{p}}=\Vert \cdot\Vert _{L^{p}}(\Omega _{F}^{1}(r))$ for $p=2$ or $4$.}

The second term in the right-hand side of \eqref{int} is estimated as
follows:
\begin{eqnarray*}
&&|\int_{0}^{t}dr \int_{\partial {\Omega _{F}^{1}}({r})}(\mathbf{u}\cdot
\mathbf{n})(\mathbf{u}\cdot \mathbf{U}_{2})\ d\mathbf{\gamma }| \\
&\leq &C\int_{0}^{t}(|\mathbf{a}(r)|+|\boldsymbol{\omega }(r)|)\Vert \mathbf{%
u}(r)\Vert _{L^{2}(\partial S_{1}(r))}\Vert \mathbf{U}_{2}(r)\Vert
_{L^{2}(\partial S_{1}(r))}\ dr \\
&\leq &C\int_{0}^{t}(|\mathbf{a}(r)|+|\boldsymbol{\omega }(r)|)\Vert \mathbb{%
D}\mathbf{u}(r)\Vert _{L^{2}(\Omega _{F}^{1}(r))}\Vert \mathbb{D}\mathbf{U}%
_{2}(r)\Vert _{L^{2}(\Omega _{F}^{1}(r))}\ dr \\
&\leq &\frac{C}{\varepsilon }\int_{0}^{t}\Vert \mathbf{u}(r)\Vert
_{L^{2}(\Omega )}^{2}\Vert \mathbb{D}\mathbf{U}_{2}(r)\Vert _{L^{2}(\Omega
_{F}^{1}(r))}^{2}\ dr+C\varepsilon \int_{0}^{t}\Vert \mathbb{D}\mathbf{u}%
(r)\Vert _{L^{2}(\Omega _{F}^{1}(r))}^{2}\ dr \\
&\leq &\frac{C}{\varepsilon }\int_{0}^{t}\Vert \mathbf{u}(r)\Vert
_{L^{2}(\Omega )}^{2}\ dr+C\varepsilon \int_{0}^{t}\Vert \mathbb{D}\mathbf{u}%
(r)\Vert _{L^{2}(\Omega _{F}^{1}(r))}^{2}\ dr.
\end{eqnarray*}%
Moreover, \ applying \eqref{eq6} of the Reynolds transport theorem we have:
\begin{equation*}
{\frac{1}{2}\frac{d}{dt}\int_{S_{1}(t)}\mathbf{|u|^{2}}\,d\mathbf{x}=\frac{1%
}{2}\int_{S_{1}(t)}\partial _{t}|\mathbf{u}|^{2}d\mathbf{x}+\int_{S_{1}(t)}%
\mathbf{u}_{1}\otimes \mathbf{u}:\nabla \mathbf{u}\,d\mathbf{x}.}
\end{equation*}

Finally, the remainder terms in (\ref{WeakDif}) can be estimated by using
Lemmas \ref{EstimateRHS} and \ref{EstimateRot}.

By putting these estimates together we conclude:
\begin{eqnarray*}
\Vert \mathbf{u}(t)\Vert _{L^{2}(\Omega )}^{2} &+&2{\mu}\int_{0}^{t}\Vert \mathbb{%
D}\mathbf{u} ({r}) \Vert^2 _{{L^{2}(\Omega
_{F}^{1}(r))}}\ dr\leq
C\varepsilon\int_{0}^{t} \Vert \mathbb{D}\mathbf{u}({r})\Vert^{2} _{{L^{2}(\Omega
_{F}^{1}(r))}}\ dr  \\
&+&C\int_{0}^{t}(|\mathbf{a}(r)|^{2}+|\boldsymbol{\omega }(r)|^{2})\
dr \\
&+&\frac{C}{\varepsilon }\int_{0}^{t}\Vert \mathbf{u}({r})\Vert _{L^{2}({ \Omega })}^{2}\big (\Vert \mathbf{U}_{2}(r)\Vert _{L^{4}(\Omega
_{F}^{1}(r))}^{2}+\Vert \mathbf{U}_{2}(r)\Vert _{L^{4}(\Omega
_{F}^{1}(r))}^{8}+1\big )\ dr \\
&+&C\int_{0}^{t}(|\mathbf{a}(r)|+|\boldsymbol{\omega }(r)|)\Vert \mathbf{u}%
(r)\Vert _{L^{2}({\Omega })}\ dr.
\end{eqnarray*}%
Using Young's inequality and taking {$\varepsilon=\frac{\mu}{C}$, the term ${ \Vert \mathbb{%
D}\mathbf{u} ({r}) \Vert^2 _{L^{2}(\Omega
_{F}^{1}(r))}
} $ } can be absorbed in the left-hand side and we get:
\begin{equation*}
\Vert \mathbf{u}(t)\Vert _{L^{2}(\Omega )}^{2}\leq C\int_{0}^{t}\Vert
\mathbf{u}(r)\Vert _{L^{2}(\Omega )}^{2}\big (1+\Vert \mathbf{U}_{2}(r)\Vert
_{L^{4}(\Omega _{F}^{1}(r))}^{2}+\Vert \mathbf{U}_{2}(r)\Vert _{L^{4}(\Omega
_{F}^{1}(r))}^{8})\ dr.
\end{equation*}%
Hence we finish the proof by applying the integral Gronwall's inequality and
conclude that $\mathbf{u}=0$.

{ \textbf{Justification of formal calculation}. Notice that $\bu$ does not have enough regularity to justify previous calculations. However, the formal calculation can be justified in the  standard way (e.g. see the proof of Theorem 4.2 in \cite{Galdi}, or the proof of Theorem III.3.9 in \cite{Temam}) using the fact that $\bU_2$ is a strong solution with the regularity stated in Theorem~\ref{theorem-strong}. Namely, $\bU_2$ can be taken as a test function in the weak formulation for the solution $\bu_1$. On the other hand, one can multiply \eqref{transformedNS} by $\bu_1$ and integrate by parts in just space variables. By combining these two equalities together with the energy equality for $\bU_2$ and the energy inequality for $\bu_1$ one can recover the presented formal calculation. We emphasize that in this argument we used the strong regularity of $\bU_2$, and therefore we did not need any regularization. $\hfill \;\blacksquare $
}
\end{proof}

\bigskip

\section{Appendix. Local transformation}

\label{Sec:LT}

\bigskip

Since the fluid domain depends on the motion of the rigid body, we transform
the problem to a fixed domain. We define the local transformation as in
Takahaski \cite{T}. Let us point that such type of transformation firstly
was suggested by Inoue and Wakimoto \cite{IW} and then extensively used in the context of
strong solution to fluid-rigid body systems (see e.g. \cite{GGH,T}). Here we just briefly
repeat the main facts about this transformation for the convenience of the reader. Let us just
emphasize that our case is slightly different since we are not transforming to the fixed
cylindrical domain, but form one moving domain to the other. However, the essential fact
for this transformation is that the change of variable is volume preserving diffeomorphism - which is true also on our case.

Let us $\delta (t)=dist(S(t),\partial \Omega )$. We fix $\delta _{0}$ such
that $\delta (t)>\delta _{0}$ and define the solenoidal velocity field $%
\Lambda (t,\mathbf{x})$ such that $\Lambda =0$ in the $\delta _{0}/4$
neighborhood of $\partial \Omega $, $\Lambda =\mathbf{a}(t)+\boldsymbol{%
\omega }(t)\times (\mathbf{x}-\mathbf{q}(t))$ in the $\delta _{0}/4$
neighborhood of $S(t)$. Let us define the flow $\mathbf{X}(t):\Omega
\rightarrow \Omega $ \ as the unique solution of the system:
\begin{equation*}
\frac{d}{dt}\mathbf{X}(t,\mathbf{y})=\Lambda (t,\mathbf{X}(t,{\mathbf{y}}%
)),\qquad \mathbf{X}(0,\mathbf{y})=\mathbf{y},\qquad \forall \ {\mathbf{y}}\in
\Omega .
\end{equation*}%
We denote $\mathbf{Y}$ the inverse of $\mathbf{X}$, \ i.e.
\begin{equation*}
\mathbf{Y}(t,\cdot )=\mathbf{X}(t,\cdot )^{-1}.
\end{equation*}

Let us write the unknown functions $({\mathbf{u}},p,\boldsymbol{\omega },%
\mathbf{a},\mathbb{T})$ by the change of variables $\mathbf{x}\rightarrow
\mathbf{y.}$ Then in the system of coordinates $\mathbf{y}\in \Omega $ we
obtain the new unknown functions:
\begin{equation*}
\left.
\begin{array}{l}
{\mathbf{U}(t,\mathbf{y})=\mathcal{J}_{Y}(t,\mathbf{X}(t,\mathbf{y}))\mathbf{%
u}(t,\mathbf{X}(t,\mathbf{y})),}\qquad P(t,\mathbf{y})=p(t,\mathbf{X}(t,%
\mathbf{y})), \\
\Xi (t)={\mathbb{Q}}^{t}(t)\boldsymbol{\omega }(t),\quad \quad \qquad \qquad
\qquad \qquad \xi (t)={\mathbb{Q}}^{t}(t)\mathbf{a}(t), \\
\mathcal{T}({\mathbf{U}}(t,\mathbf{y}),P(t,\mathbf{y}))=\mathbb{Q}^{T}(t)%
\mathbb{T}({\mathbb{Q}}(t){\mathbf{U}}(t,\mathbf{y}),P(t,\mathbf{y})){%
\mathbb{Q}}(t)%
\end{array}%
\right\} \;\mathrm{for}\;t\in \lbrack 0,T],\;\;\mathbf{y}\in \Omega _{0},
\end{equation*}%
The Jacobian of this change of variables $\mathbf{x}\rightarrow \mathbf{y}$
\ is denoted by
\begin{equation*}
{\mathcal{J}_{Y}(t,\mathbf{X}(t,\mathbf{y}))=\Big(\frac{\partial Y_{i}}{%
\partial {x}_{j}}\Big),}
\end{equation*}

In the sequel \ we derive a system which satisfies the new unknown functions
$({\mathbf{U}},P,\Xi ,\xi ,\mathcal{T}).$ Our system \eqref{NS}-\eqref{IC}
is written in terms of the variables $(t,\mathbf{x}),$ therefore \ we have
to rewrite these equations in terms of the new variables \ $\mathbf{Y}=%
\mathbf{Y}(t,\mathbf{x})$.

Let us first note that the determinant of the Jacobian $\mathcal{J}_{Y}$
equals to $1,$ since { $ \Lambda $ is a divergence free vector field}. Hence using these
change of variables we have:
\begin{eqnarray*}
\int_{\partial S(t)}{\mathbb{T}}(\mathbf{u},p)\mathbf{n}(t)\ d\mathbf{\gamma
}(\mathbf{x}) &=&{\mathbb{Q}}\int_{\partial S(0)}\mathcal{T}(\mathbf{U},P)%
\mathbf{N}\ {d\sigma }(\mathbf{y}), \\
\int_{\partial S(t)}(\mathbf{x}-\mathbf{q}(t))\times {\mathbb{T}}(\mathbf{u}%
,p)\mathbf{n}(t)\ d\mathbf{\gamma }(\mathbf{x}) &=&{\mathbb{Q}}%
\int_{\partial S(0)}\mathbf{y}\times \mathcal{T}(\mathbf{U},P)\mathbf{N}\ {%
d\sigma }(\mathbf{y}),
\end{eqnarray*}%
where ${d\sigma }$ indicates the surface measure over non-moving surface $%
\partial S(0)$.

In the sequel we derive the equations which satisfy these new unknown
functions $({\mathbf{U}},P,\Xi ,\xi ,\mathcal{T}).$ Let us introduce the
metric covariant tensor
\begin{equation*}
g_{ij}=X_{k,i}X_{k,j}, \qquad X_{k,i}=\frac{\partial X_{k}}{\partial y_{i}},
\end{equation*}%
the metric covariant tensor
\begin{equation*}
g^{ij}=Y_{i,k}Y_{j,k}, \qquad Y_{i,k}=\frac{\partial Y_{i}}{\partial x_{k}}
\end{equation*}%
and the Christoffel symbol (of the second kind)
\begin{equation*}
\Gamma _{ij}^{k}=\frac{1}{2}g^{kl}(g_{il,j}+g_{jl,i}-g_{ij,l}),\qquad
g_{il,j}=\frac{\partial {g_{il}}}{\partial y_{j}}.
\end{equation*}%
It is easy to observe that in particular it holds:
\begin{equation*}
\Gamma _{ij}^{k}=Y_{k,l}X_{l,ij}, \qquad X_{l,ij}=\frac{\partial X_{l}}{%
\partial y_{i}\partial y_{j}}.
\end{equation*}

Hence under the change of variables $\mathbf{x}\rightarrow \mathbf{y}$ \ the
operator $\mathcal{L}$ is the transformed Laplace operator and it is given
by
\begin{align}
(\mathcal{L}\mathbf{u})_{i} =& \sum_{j,k=1}^{n}\partial
_{j}(g^{jk}\partial_k \mathbf{u}_{i})+2\sum_{j,k,l=1}^{n}g^{kl}\Gamma
_{jk}^{i}\partial _{l}\mathbf{u}_{j}  \notag \\
& + \sum_{j,k,l=1}^{n}\big(\partial _{k}(g^{kl}\Gamma
_{jl}^{i})+\sum_{m=1}^{n}g^{kl}\Gamma _{jl}^{m}\Gamma _{km}^{i}\Big)\mathbf{u%
}_{j}.  \label{OpL}
\end{align}

The convection term is transformed into
\begin{equation}
(\mathcal{N}\mathbf{u})_i = \sum _{j=1}^n \mathbf{u}_j \partial _j \mathbf{u}%
_i + \sum_{j,k+1}^n \Gamma ^i_{jk} \mathbf{u}_j\mathbf{u}_k
=(\bu\cdot\nabla\bu)_i+(\tilde{\mathcal{N}}\bu)_i.  \label{OpC}
\end{equation}

The transformation of time derivative and gradient are given by
\begin{equation}
(\mathcal{M} \mathbf{u})_{i} = \sum _{j=1}^n \dot{\mathbf{Y}}_j \partial _j
\mathbf{u}_i + \sum _{j,k=1}^n \Big(\Gamma _{jk}^i \dot{\mathbf{Y}}_k +
(\partial _k \mathbf{Y}_i)(\partial _j \dot{\mathbf{X}}_k)\Big)\mathbf{u}_j.
\label{OpN}
\end{equation}

The gradient of pressure is transform as follows:
\begin{equation}
(\mathcal{G}p)_{i}=\sum_{j=1}^{n}g^{ij}\partial _{j}p.  \label{OpP}
\end{equation}%
Therefore combining all formulas \eqref{OpL}-\eqref{OpP} we see that after the change of variables the system %
\eqref{NS}-\eqref{IC} is transformed into the following system:
\begin{equation*}
\left.
\begin{array}{rcl}
{\partial_t}\mathbf{U}+(\mathcal{M}-{\mu}\mathcal{L})\mathbf{U}&=&-\mathcal{N}(\mathbf{U})-%
\mathcal{G}p, \\
\mbox {div }\mathbf{U}&=&0%
\end{array}%
\right\} \;\mathrm{in}\;{(0,T)\times\Omega _{F}(0)},
\end{equation*}%
\begin{equation*}
\left.
\begin{array}{rcl}
\frac{d}{dt}\boldsymbol{\xi }&=&-m(\boldsymbol{\Xi }\times {\boldsymbol{\xi }}%
)-\int_{\partial S(0)}\mathcal{T}(\mathbf{U},P)\mathbf{N}{d\sigma }, \\
I\frac{d}{dt}\boldsymbol{\Xi }&=&\boldsymbol{\Xi }\times (I\boldsymbol{\Xi }%
)-\int_{S(0)}\mathbf{y}\times \mathcal{T}(\mathbf{U},P)\mathbf{N}{d\sigma }%
\end{array}%
\right\} \;\mathrm{in}\;(0,T),
\end{equation*}%
\begin{equation*}
\begin{array}{rcl}
(\mathbf{U}-\mathbf{U}_{s})\cdot \mathbf{N}&=&0,\\  {\beta }(\mathbf{U}-%
\mathbf{U}_{s})\cdot \tau &=&-2{\mu}\mathbb{D}(\mathbf{U})\mathbf{N}\cdot \tau
\qquad \mathrm{on}\;(0,T)\times\partial S(0),
\end{array}
\end{equation*}%
\begin{eqnarray*}
\mathbf{U} &=&0\ \quad \text{on}\ \partial \Omega , \\
\boldsymbol{\xi }(0) &=&\mathbf{a}(0)\quad \mbox{ and }\quad \boldsymbol{\Xi
}(0)=\boldsymbol{\omega }(0),
\end{eqnarray*}%
where $\mathbf{U}_{s}=\boldsymbol{\Xi }(t)\times \mathbf{y}+\boldsymbol\xi (t)$ is
the transformed rigid velocity $\mathbf{u}_{s}$; $\ \mathbf{N}=\mathbf{N}($%
\textbf{$\by$}$)$ is the unit normal at \ $\mathbf{y}\in \partial S(0),$
directed inside of $S(0);$\ $I={\mathbb{Q}}^{t}\mathbb{J}{\mathbb{Q}}$ is
the transformed inertia tensor which no longer depends on time (see details
in the article \cite{GGH}).


\bigskip

\textbf{Acknowledgements:} \vskip0.25cm \textit{{ The authors would like to thank the anonymous referee for her/his comments that helped us to improve the manuscript. We also would like to thank A. Rado\v sevi\' c for discussion and her comments about the manuscript.} The work of \v{S}. Ne\v{c}%
asová was supported by Grant No. 16-03230S of GA\v{C} in the framework of
RVO 67985840. The work of B. Muha was supported by by Croatian Science Foundation grant number 9477.}

\bigskip

\end{document}